\makeatletter \@addtoreset{equation}{section}
\newtheorem{theorem}{Theorem}[section]
\newtheorem{lemma}{Lemma}[section]
\newtheorem{corollary}{Corollary}[section]
\newtheorem{definition}{Definition}
\newtheorem{remark}{Remark}
\newtheorem{example}{Example}
\newcommand{\vect}{{\sf{vec}}}
\newcommand{\rt}{{\top }}
\def\Oh{{\mathcal O}}
\def\rel{{\rm rel}}
\def\u{{\rm U}}
\def\bfH{{\mathcal M}}
\def\bfJ{{\mathcal J}}
\def\sfv{{v}}
\newcommand {\eq} [1] {\begin{equation}\label{#1}}
\newcommand {\en} {\end{equation}}
\newcommand {\cM}       {{\cal M}}
\newcommand {\cX}       {{\cal W}} 
\newcommand {\cY}       {{\cal V}}
\newcommand {\R}        {{\mathbb R}}
\newcommand {\mat}      [1] {\left[\begin{array}{#1}}
\newcommand {\rix}          {\end{array}\right]}
\newcommand {\diag}     {\mathop{\sf Diag}\nolimits}
\newcommand {\cond}     {\mathop{\rm cond}\nolimits}
\newcommand {\trace}    {\mathop{\sf trace}\nolimits}
 \font\tenex=cmex10 
 \newdimen\p@renwd
 \def\bmat#1{\begingroup \m@th
   \setbox\z@\vbox{\def\cr{\crcr\noalign{\kern2\p@\global\let\cr\endline}}%
     \ialign{$##$\hfil\kern2\p@\kern\p@renwd&\thinspace\hfil$##$\hfil
       &&\quad\hfil$##$\hfil\crcr
       \omit\strut\hfil\crcr\noalign{\kern-\baselineskip}%
       #1\crcr\omit\strut\cr}}%
   \setbox\tw@\vbox{\unvcopy\z@\global\setbox\@ne\lastbox}%
   \setbox\tw@\hbox{\unhbox\@ne\unskip\global\setbox\@ne\lastbox}%
   \setbox\tw@\hbox{$\kern\wd\@ne\kern-\p@renwd\left[\kern-\wd\@ne
     \global\setbox\@ne\vbox{\box\@ne\kern2\p@}%
     \vcenter{\kern-\ht\@ne\unvbox\z@\kern-\baselineskip}\,\right]$}%
   \null\;\vbox{\kern\ht\@ne\box\tw@}\endgroup}
\def\bu{{\bf a }}
\def\bv{{ \bf b }}
\def\bz{{z}}
\def\bw{{w }}
\def\bfH{{\mathcal N}}
\def\bfJ{{\mathcal H}}
\def\sd{{\sf     d}}
\newcommand{\call}{{{L }}}
\journal{Journal of \LaTeX\ Templates}
\begin{document}

\begin{frontmatter}

\title{Mixed and componentwise condition numbers for a linear function of the solution of the total least squares problem}

\author[mymainaddress]{Huai-An Diao\corref{mycorrespondingauthor}
}
\cortext[mycorrespondingauthor]{Corresponding author}
\fntext[myfootnote]{Email address: hadiao@nenu.edu.cn, hadiao78@yahoo.com (H.A. Diao), suny235@nenu.edu.cn (Y. Sun). }

\author[mymainaddress]{Yang Sun}


\address[mymainaddress]{School of Mathematics and Statistics, Northeast
Normal University, No. 5268 Renmin Street, Chang Chun 130024, P.R.
of China.}

\begin{abstract}
In this paper,
we consider the mixed and componentwise condition numbers for a linear function of the solution to the total least squares (TLS) problem. We derive the explicit expressions of the mixed and componentwise condition numbers through the dual techniques. The sharp upper bounds for the derived  mixed and componentwise condition numbers are obtained. For the structured TLS problem, we consider the structured perturbation analysis and obtain the corresponding expressions of the mixed and componentwise condition numbers. We prove that the structured ones are smaller than their corresponding unstructured ones based on the derived expressions. Moreover, we point out that the new derived expressions can recover the previous results on the condition analysis for the TLS problem.  The numerical examples show that the derived condition numbers can give sharp perturbation bounds, on the other hand normwise condition numbers can severely overestimate the relative errors because normwise condition numbers ignore the data sparsity and scaling. Meanwhile, from the observations of numerical examples, it is more suitable to adopt structured condition numbers to measure the conditioning for the structured TLS problem.
\end{abstract}

\begin{keyword}
Total  least squares problem \sep  componentwise perturbation  \sep condition number  \sep   adjoint operator  \sep  structured perturbation.\MSC[2010] 15A09 \sep 15A12 \sep 65F35
\end{keyword}

\end{frontmatter}

\section{Introduction}
\setcounter{equation}{0}
 For a given  over-determined set of $m$ linear equations $A x \approx b$ in $x \in \mathbb{R}^n$, the total least squares (TLS)   problem \cite{GolubTLS1980,GolubVanLoan2013Book,VanHuffelVandewalle1991Book} is defined by
\begin{eqnarray}\label{TLS:definition}
{\rm minimize}&&\quad\left\|[A, ~b]-[\widehat{A}, ~\widehat{b}]\right\|_F\\
\mbox{subject to}&&\quad \widehat{b} \in {\sf R}(\widehat{A}),\, [\widehat{A}, ~\widehat{b}]\in \R^{m\times(n+1)},\nonumber
\end{eqnarray}
where $A\in \R^{m\times n}$, $b\in \R^{m}$, ${\sf R}(A)$ denotes the range space of the matrix $A$ and $\|\cdot\|_F$ is the Frobenius norm.  Let $[\widehat{A}, ~\widehat{b}]$ is  a minimizer of \eqref{TLS:definition}, then any $x$ satisfying $\widehat{A}x=\widehat{b}$ is called a TLS solution and $[\widehat {\Delta {A}}, ~\widehat{\Delta {b}}]=[A, ~b]-[\widehat{A}, ~\widehat{b}]$ is the corresponding TLS correction.  In order to guarantee the existence and  uniqueness of the TLS solution to \eqref{TLS:definition}, the {\em  genericity} condition \eqref{eq:genericity} of the TLS problem was first introduced in \cite{GolubTLS1980}.  In this paper, we always assume  that the genericity condition holds (for more information about the {\em nongeneric} problem, see, e.g.\cite{VanHuffelVandewalle1991Book}). The TLS problem has many applications in computer vision, image reconstruction, speech and audio processing, modal and spectral analysis,  linear system theory, and system identification, etc; see the review paper \cite{MarkovskyVanHuffel2007} of TLS for more details.

Now we first review numerical methods for TLS. When the size of TLS problem is small or medium, a classical direct solver based on the full singular value decomposition (SVD) of the augmented matrix $[A, ~b]$ can be adopted; see \cite[Algoritm 3.1]{VanHuffelVandewalle1991Book}. The solution in the generic case can be obtained from the right singular vector corresponding to the smallest singular value of $[A, ~b]$. The improved version of the SVD-based direct method can be implemented by using the partial SVD of $[A,~b]$ to compute in an efficient and reliable way a basis of the left and/or right singular subspace of a matrix associated with its smallest singular values; see \cite[Chapter 4]{VanHuffelVandewalle1991Book} for more details. The iterative method combining the Rayleigh quotient iteration and preconditioned conjugate gradient method was proposed in \cite{Bjorck2000SIMAX} for the large-scale and sparse TLS problem. A lot of researchers had paid attentions to the numerical solver for the large-scale structured TLS problem; see the papers \cite{BeckSIAM2005,BeckSIAM2007,MastronardiEtal2001,KammNagy1998,LemmerlingVanHuffel2002,LemmerlingVanHuffel2001}.
%

In sensitivity analysis, the condition number is considered as a fundamental tool since they describes the worst-case sensitivity of the solution to a problem with respect to small perturbations in the input data. The problem with a large condition number is called an {\em ill-posed} problem  (cf. \cite{Higham2002Book}). Since the 1980's, there had been  some papers related to  the perturbation analysis for the TLS problem; see \cite{FierroBunch,GolubTLS1980,Weib} and the references therein. As far as we know, the general normwise condition numbers were studied by  Rice \cite{Rice}, which measure errors for both input and output data by means of norm. However, when the data is badly-scaled or sparse, normwise condition numbers may allow large relative perturbations on small entries and give over-estimated perturbation bounds. To overcome the shortcoming of  normwise condition numbers, componentwise perturbation analysis has been extensively studied for many classical problems in matrix computation; see the comprehensive survey \cite{Higham1993CompSurvey} and the references therein. Because rounding errors for the data in the floating point system are measured  componentwisely, it is more reasonable to adopt componentwise perturbation analysis and more sharper bounds can be obtained through componentwise perturbation analysis. In fact, most error bounds in LAPACK
\cite{Anderson1999Lapack} are based on componentwise perturbation analysis. In componentwise perturbation analysis, two types of condition numbers, described as mixed and componentwise,  were proposed; see \cite{CuckerDiaoWei2007MC,GohbergKoltracht1993SIMAX,Rohn89,Skeel79} for details.

 Under the genericity condition, normwise condition numbers for the TLS problem had been studied in \cite{Baboulin2011SIMAX,JiaLi2013}. Specifically, the explicit expressions, their lower and upper bounds, replying on the SVDs of the matrix $A$ and/or the augmented matrix $[A,~b]$, were derived. However, as stated in the previous paragraph, when the data is sparse or badly scaled, normwise condition numbers may heavily over-estimate the conditioning of the TLS problem. Thus it is necessary to consider the conditioning of the TLS problem using componentwise perturbation analysis, which had been done in \cite{Zhou}. As shown \cite[Example 1]{DiaoWeiXie}, there are big differences between normwise condition numbers and mixed/componentwise condition numbers, which again confirms that it is  necessary to  study mixed and componentwise condition numbers for the TLS problem. Moreover, when the TLS problem is structured, it is suitable to study the structured perturbation analysis because this will help us to understand the structured preserved algorithms; see \cite{LiJia2011}. Structured perturbation analysis for linear system, linear least squares and Tikhonov regularization problem had been investigated in \cite{Higham1998structured,Rump03a,Rump03b,CuckerDiao2007Calcolo,DiaoWeiQiao2016,XuWeiQiao}, respectively.

In this paper,  under the genericity condition, we study the sensitivity  of a linear function of the TLS solution $x$ to perturbations on the date $A$ and $b$, which is defined as
\begin{align}\label{eq:g dfn}
\Psi: \R^{m\times n} \times
\R^m   &\rightarrow \R^k\\
\Psi(A,\,b)&:=\call x,\nonumber
\end{align}
where $x$ is the unique solution to the TLS problem \ref{TLS:definition}, and $\call$ is an $k$-by-$n$, $k \le n$, matrix introduced for
the selection of the solution components. For example, when
$\call= I_n$ ($k=n$), all the $n$ components of the solution $x$
are equally selected. When $\call= e_i$ ($k=1$), the $i$th unit
vector in $\R^n$, then only the $i$th component of the
solution is selected. In the reminder of this paper, we always suppose that $\call$ is not numerically perturbed. Condition numbers for a linear function of the solution to linear system \cite{ChandrasekaranIpsen1993}, linear least squares \cite{ArioliBaboulinGratton2007,BaboulinGratton2009BIT,BaboulinDongarraGratton2009} and the TLS problem \cite{Baboulin2011SIMAX} had been studied extensivley. Contrary to \cite{Baboulin2011SIMAX} for the normwise condition number of the TLS problem, in this paper, we will consider mixed and componentwise condition numbers of a linear function of the TLS solution $x$ under unstructured and structured perturbations. For the structured TLS problem  \cite{BeckSIAM2005,BeckSIAM2007,MastronardiEtal2001,KammNagy1998,LemmerlingVanHuffel2002,LemmerlingVanHuffel2001}, we consider the case that $A$ is  a linear structure matrix, for example Toeplitz matrix.  Because the set $\cal S$ of the linear structured matrix is a subspace of $\R^{m\times n}$, we assume its dimension is $q$ and there exits a uniques vector denoted by $a$ such that
\begin{equation}\label{eq:A linear}
	A=\sum_{i=1}^q a_i S_i,
\end{equation}
where $S_1,\ldots, S_q$ is the basis of $\cal S$. In this paper, we also study the sensitivity  of a linear function of the structured TLS solution $x$ to perturbations on the date $a$ and $b$, which is defined as
\begin{align}\label{eq:g dfn s}
\Psi_s: \R^{q} \times
\R^m   &\rightarrow \R^k\\
\Psi(a,\,b)&:=\call x, \nonumber
\end{align}
where $x$ is the unique solution to the structured TLS problem under genericity condition.

This paper is devoted to obtain the explicit expressions for   mixed and componentwise condition numbers of the linear function of the TLS solution when perturbations on data are measured componentwise and the perturbations on the solution are measured either componentwise or normwise by means of the dual techniques \cite{BaboulinGratton2009BIT}. In particular, as also mentioned in \cite{BaboulinGratton2009BIT}, the dual techniques enable us to derive condition numbers by maximizing a linear function over a space of smaller dimension than the data space. Both the unstructured and structured  condition numbers  are considered. We also study the relationship between those two type condition numbers, and prove that the structured ones are smaller than the unstructured ones from the derived expressions. Moreover,  the expressions of the proposed condition numbers can recover the pervious results \cite{Zhou,LiJia2011} when $L=I_n$. By taking account of the SVD method for solving TLS, we give SVD-based formulae of the proposed condition numbers. Numerical examples show that our theoretical results are effective. Especially,  in Example \ref{ex:1} our unstructured mixed and componentwise condition numbers for a linear function of the TLS solution $x$ can be much smaller than the nowise condition number given by \cite{Baboulin2011SIMAX}, which means that  it is more suitable to use the mixed and componentwise condition numbers to measure the conditioning of TLS when the data is sparse or badly-scaled. For the structured TLS problem, Example \ref{ex:str} shows it is necessary to adopt the structured mixed and componentwise condition numbers instead of using the unstructured ones to measure the conditioning for the structured TLS problem.

The paper is organized as follows. In Section \ref{sec:pre},  some basic result of the TLS problem and the dual techniques for deriving condition number  \cite{BaboulinGratton2009BIT} are reviewed. We derive the explicit expression for the unstructured and structured condition numbers, and study the relationship between them. Also we prove that our results can recover the previous condition numbers expressions of the TLS problem when $L=I_n$. Sharp upper bounds for unstructured mixed and componentwise condition numbers are also given. Moreover, by taking account of the SVD method for solving the TLS problem, we obtain SVD-based fomula for our proposed condition numbers.  We do some numerical examples to show the effectiveness of the proposed condition numbers in Section \ref{sec:nume ex}.  At end, in Section \ref{sec:con} concluding remarks are drawn.



\section{Preliminaries}\label{sec:pre}
In this section we  give some backgrounds on theoretical results on the TLS problem. Also, the dual techniques for deriving condition number's expressions are reviewed.

\subsection{Basic results}
Assume that we have the SVDs of $A$ and $[A, ~b]$, respectively,
\begin{equation}\label{svd}
A = \widetilde{U} \widetilde{\Sigma} \widetilde{V}^\top,\quad [A, ~b]=U\Sigma V^ \top,
\end{equation}
where $U, ~\widetilde{U} \in \R^{m\times m},\, \widetilde{V} \in \R^{n\times n}$ and $V\in \R^{(n+1)\times (n+1)}$ are orthogonal, $\widetilde{\Sigma}=\diag(\widetilde{\sigma}_1,\widetilde{\sigma}_2,
\ldots,\widetilde{\sigma}_n)\in \R^{m\times n},\, \widetilde{\sigma}_1 \geq \widetilde{\sigma}_2 \geq \ldots \geq \widetilde{\sigma}_n \geq 0$ and $\Sigma=\diag(\sigma_1,\ldots,\sigma_n,\sigma_{n+1})\in \R^{m\times (n+1)},\, \sigma_1\geq\sigma_2\geq\ldots\geq\sigma_{n+1}\geq 0.$
Let $v_{n+1}$ be the last column of $V$ and  $v_{n+1,n+1}$ denotes its $(n+1)$th component of $v_{n+1}$.
It is assumed that the {\em genericity} condition
\begin{equation}\label{eq:genericity}
\widetilde{\sigma}_n>\sigma_{n+1},
\end{equation}
holds to ensure the existence and uniqueness of the TLS solution \cite{GolubTLS1980}.
As mentioned in \cite[Page 35]{VanHuffelVandewalle1991Book}, the genericity condition \eqref{eq:genericity} is equivalent to
$\sigma_n >\sigma_{n+1} \mbox{ and } v_{n+1,n+1}\neq 0$.
The following identities hold for the TLS solution $x$ (cf. \cite{GolubTLS1980})
\begin{align}\label{ex:x}
 \begin{bmatrix}x\cr -1\end{bmatrix} = -\frac{1}{v_{n+1,n+1}}v_{n+1},\quad   v_{n+1,n+1}=\frac{1}{\sqrt{1+x^\top x}}.
\end{align}
It follows from \cite[Page 36, Theorem 2.7]{VanHuffelVandewalle1991Book} that the TLS solution $x$ satisfies the equation
\begin{equation}\label{eq:norml eq}
\left( A^\top A-\sigma_{n+1}^2 I_n \right) x=A^\top b.
\end{equation}
From the SVD of $[A,~b]$, it is easy to check that
\begin{equation}\label{eq:r}
  r=b-Ax=-[A,~b]\begin{bmatrix}
    x\cr -1
  \end{bmatrix}=\frac{1}{v_{n+1,n+1}}[A,~b]v_{n+1}=\frac{\sigma_{n+1}}{v_{n+1,n+1}}u_{n+1},
\end{equation}
where $u_{n+1}$ is the $(n+1)$th column of $U$.
%


Lemma \ref{lemma:P} presents an explicit expression for the inverse of $P$ in  \eqref{eq:norml eq}.

\begin{lemma}\label{lemma:P}\cite[Lemma 2]{DiaoWeiXie}
Let $P=A^{\top}A-\sigma_{n+1}^2 I_{n}$. Under the genericity condition \eqref{eq:genericity}, recalling $x$ given by \eqref{ex:x}, it holds that
\begin{align*}
P^{-1}&=Q_1 Q Q_1, \nonumber
\end{align*}
where $Q=V_{11} D^{-1 }V_{11}^\top$,  $V_{11}$ is the leading $n\times n$ submatrix $V$ in \eqref{svd},  $Q_1=I_n+xx^\top$, $D=\diag(\sigma_1^2-\sigma_{n+1}^2,\sigma_2^2-\sigma_{n+1}^2,\ldots, \sigma_n^2-\sigma_{n+1}^2)$.
\end{lemma}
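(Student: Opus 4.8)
The plan is to reduce everything to the leading $n\times n$ block $V_{11}$ of the orthogonal factor $V$ in the SVD \eqref{svd} of $[A,\,b]$, using only orthogonality of $V$, the relations \eqref{ex:x}, and the Sherman--Morrison formula.

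First I would partition $V = \begin{bmatrix} V_{11} & \bar v \\ w^\top & v_{n+1,n+1}\end{bmatrix}$, where $\bar v\in\R^n$ collects the first $n$ entries of $v_{n+1}$ (the entries of $w$ play no role). Comparing the first $n$ components of the first identity in \eqref{ex:x} gives $\bar v = -v_{n+1,n+1}\,x$, so by the second identity in \eqref{ex:x},
\[
\bar v\bar v^\top = v_{n+1,n+1}^2\,xx^\top = \frac{xx^\top}{1+x^\top x}.
\]
The $(1,1)$ block of $VV^\top = I_{n+1}$ reads $V_{11}V_{11}^\top + \bar v\bar v^\top = I_n$; combined with the previous display and the Sherman--Morrison identity this yields
\[
V_{11}V_{11}^\top = I_n - \frac{xx^\top}{1+x^\top x} = (I_n + xx^\top)^{-1} = Q_1^{-1}.
\]
In particular $V_{11}$ is nonsingular (its Gram matrix is positive definite) and $Q_1 = (V_{11}^\top)^{-1}V_{11}^{-1}$.

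Next I would take the $(1,1)$ block of $[A,\,b]^\top[A,\,b] = V\,\diag(\sigma_1^2,\ldots,\sigma_{n+1}^2)\,V^\top$, which gives $A^\top A = V_{11}\,\diag(\sigma_1^2,\ldots,\sigma_n^2)\,V_{11}^\top + \sigma_{n+1}^2\,\bar v\bar v^\top$. Substituting $\bar v\bar v^\top = I_n - V_{11}V_{11}^\top$ from the previous step cancels the $\sigma_{n+1}^2 I_n$ term and produces
\[
P = A^\top A - \sigma_{n+1}^2 I_n = V_{11}\bigl(\diag(\sigma_1^2,\ldots,\sigma_n^2) - \sigma_{n+1}^2 I_n\bigr)V_{11}^\top = V_{11}\,D\,V_{11}^\top .
\]
Under the genericity condition $\sigma_n>\sigma_{n+1}$, so $D$ is positive definite, hence invertible, and therefore $P^{-1} = (V_{11}^\top)^{-1}D^{-1}V_{11}^{-1}$. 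Finally, using $Q_1 = (V_{11}^\top)^{-1}V_{11}^{-1}$ one gets $Q_1V_{11} = (V_{11}^\top)^{-1}$ and $V_{11}^\top Q_1 = V_{11}^{-1}$, so that
\[
Q_1 Q Q_1 = Q_1\,V_{11}\,D^{-1}\,V_{11}^\top\,Q_1 = (V_{11}^\top)^{-1}D^{-1}V_{11}^{-1} = P^{-1},
\]
which is the assertion. I do not anticipate a serious obstacle; the only delicate points are keeping the block partition of $V$ and the sign/scaling relating $\bar v$ to $x$ straight, and spotting that $V_{11}V_{11}^\top$ equals $Q_1^{-1}$ via Sherman--Morrison --- this is the observation that makes the two-sided $Q_1$ conjugation collapse to the inverse of $P = V_{11}DV_{11}^\top$.
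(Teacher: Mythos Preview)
Your argument is correct. The paper does not reproduce a proof of this lemma (it simply cites \cite[Lemma~2]{DiaoWeiXie}), so there is no in-paper proof to compare against; the block-partition computation you give, using $VV^\top=I_{n+1}$ to obtain $V_{11}V_{11}^\top=Q_1^{-1}$ and the $(1,1)$ block of $[A,\,b]^\top[A,\,b]=V\Sigma^\top\Sigma V^\top$ to obtain $P=V_{11}DV_{11}^\top$, is exactly the standard way this identity is established and needs no further justification.
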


The classical direct solver of the  TLS solution $x$ of \eqref{TLS:definition} is to calculate the SVD of $[A,~b]$. The detailed description of this computation is shown in \cite[Algorithm 3.1]{VanHuffelVandewalle1991Book}. When the full SVD of $[A,~b]$ is computed,  the TLS solution $x$ can be computed from \eqref{ex:x} and $P^{-1}$ can be computed efficiently via Lemma \ref{lemma:P}, which help us to derive SVD-based expressions of the proposed condition numbers.

The Fr\'{e}chet derivatives of the function $\Psi$ with
respect to the input data $[A,~b]$ plays an important role in deriving condition numbers expressions, which is given in the following lemma. Let $\sd \Psi([A,~b] )$ be the Fr{\'e}chet derivative of $\Psi$ at $[A,~b] $.

\begin{lemma} \cite[Proposition 1]{Baboulin2011SIMAX}\label{lemma:gLSQI}
Under the genericity condition \eqref{eq:genericity}, the function $\Psi$ is a continuous mapping on $\R^{m\times n} \times \R^{m\times 1} $. In addition, $\Psi$ is Fr\'{e}chet
differentiable at $(A,\, b)$ and its Fr\'{e}chet derivative is given by
\begin{align}\label{eqnJ}
J&:=\sd {\Psi}(A,\,b)\cdot (\sd A,\, \sd b)=\call P^{-1} \left[(\sd A)^\top r- \left[A^\top +\frac{2xr^\top}{1+x^\top x}\right]\sd A x\right]\nonumber\\
&\quad + \call P^{-1} \left[A^\top+\frac{2xr^\top}{1+x^\top x}\right] \sd b\nonumber \\
&:=J_{1}(\sd A)+J_{2}(\sd b),
\end{align}
where $\sd A \in \R^{m\times n},\, \sd b \in \R^{m\times 1}$.
\end{lemma}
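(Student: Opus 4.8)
The plan is to differentiate the defining relation of the TLS solution implicitly and then apply $\call$. The starting point is the normal equation \eqref{eq:norml eq}, namely $P x = A^\top b$ with $P = A^\top A - \sigma_{n+1}^2 I_n$, together with the residual identity \eqref{eq:r}, $r = b - Ax$. First I would establish that $\Psi$ is continuous and Fréchet differentiable at $(A,b)$: under the genericity condition \eqref{eq:genericity} the smallest singular value $\sigma_{n+1}$ of $[A,b]$ is simple (since $\sigma_n > \sigma_{n+1}$), so $\sigma_{n+1}$ and the corresponding singular vector $v_{n+1}$ are analytic functions of the entries of $[A,b]$ in a neighbourhood; via \eqref{ex:x} the solution $x$ inherits this regularity, hence so does $\Psi = \call x$. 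This is the standard perturbation theory of simple singular values and I would cite it rather than reprove it.

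Next, the core computation. Writing $P(A)\,x(A,b) = A^\top b$ and perturbing $A \to A + \sd A$, $b \to b + \sd b$, I would take the first-order variation of both sides. The left side gives $(\sd P)\,x + P\,(\sd x)$, where $\sd P = (\sd A)^\top A + A^\top(\sd A) - (\sd(\sigma_{n+1}^2))\,I_n$; the right side gives $(\sd A)^\top b + A^\top \sd b$. The term $\sd(\sigma_{n+1}^2)$ must be handled: from the variational characterization of the simple singular value $\sigma_{n+1}$ of $[A,b]$ with unit right singular vector $v_{n+1}$, one has $\sd(\sigma_{n+1}^2) = v_{n+1}^\top \big([A,b]^\top \sd[A,b] + (\sd[A,b])^\top [A,b]\big) v_{n+1} / 1$, which using \eqref{ex:x}–\eqref{eq:r} simplifies: $[A,b] v_{n+1} = v_{n+1,n+1}\,(b - Ax)\cdot(-1)$ up to sign, i.e. proportional to $r$, and $v_{n+1} \propto \begin{bmatrix} x \\ -1 \end{bmatrix}$. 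Substituting, the $\sigma_{n+1}^2$-variation contributes the term $\dfrac{2 x r^\top}{1 + x^\top x}$ that appears in \eqref{eqnJ}: indeed $v_{n+1,n+1}^2 = 1/(1+x^\top x)$ and $\sd(\sigma_{n+1}^2)\,x = \frac{2}{1+x^\top x}\, x\, r^\top\,[(\sd A)x - \sd b] + (\text{terms that cancel})$ after collecting. Then solving $P\,\sd x = (\sd A)^\top r - \big(A^\top + \frac{2xr^\top}{1+x^\top x}\big)(\sd A)\,x + \big(A^\top + \frac{2xr^\top}{1+x^\top x}\big)\sd b$ and multiplying by $P^{-1}$ (which exists by Lemma \ref{lemma:P}) and then by $\call$ yields \eqref{eqnJ}, with the natural splitting $J = J_1(\sd A) + J_2(\sd b)$ according to the $\sd A$- and $\sd b$-dependence.

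The main obstacle will be the careful bookkeeping of the $\sd(\sigma_{n+1}^2)$ term: one must verify that after substituting the explicit forms of $v_{n+1}$ and $[A,b]v_{n+1}$ from \eqref{ex:x} and \eqref{eq:r}, all the extra pieces either cancel against contributions from $(\sd A)^\top A + A^\top \sd A$ acting on $x$ versus $(\sd A)^\top b + A^\top \sd b$, or combine exactly into the factor $\frac{2xr^\top}{1+x^\top x}$. Concretely, one uses $b = Ax + r$ to rewrite $(\sd A)^\top b = (\sd A)^\top A x + (\sd A)^\top r$, so the $(\sd A)^\top A x$ piece cancels, leaving the clean $(\sd A)^\top r$; the remaining $A^\top(\sd A) x$ term stays, and the $\sigma_{n+1}^2$-variation supplies precisely the rank-one correction. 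I would present this cancellation explicitly but keep the rest of the linear algebra terse, since it is routine once the structure is identified. Finally, continuity of $J_1, J_2$ in $(A,b)$ follows from continuity of $x$, $r$, $P^{-1}$, which is already in place, completing the identification of the Fréchet derivative.
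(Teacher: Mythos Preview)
The paper does not prove this lemma at all: it is quoted verbatim from \cite[Proposition~1]{Baboulin2011SIMAX} and simply cited. Your implicit-differentiation plan is correct and is essentially the standard derivation (and presumably the one in the cited reference): differentiate $P x = A^\top b$, use $b - Ax = r$ to collapse $(\sd A)^\top b - (\sd A)^\top A x$ into $(\sd A)^\top r$, and compute $\sd(\sigma_{n+1}^2)$ via the simple-singular-value formula $\sd(\sigma_{n+1}^2) = 2\sigma_{n+1}\, u_{n+1}^\top \sd[A,b]\, v_{n+1}$, which, after inserting \eqref{ex:x} and \eqref{eq:r}, gives precisely $\sd(\sigma_{n+1}^2) = -\frac{2}{1+x^\top x}\, r^\top\big((\sd A)x - \sd b\big)$ and hence the rank-one correction $\frac{2xr^\top}{1+x^\top x}$. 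There are no hidden cancellations beyond these; your parenthetical ``terms that cancel'' is unnecessary, as the computation closes exactly once you substitute the explicit $u_{n+1}$ and $v_{n+1}$.
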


Given the perturbations $\Delta A$ of $A$ and $\Delta b$ of $b$. Under the genericity condition \eqref{eq:genericity}, when $\|[\Delta A,~\Delta b]\|_F $ is small enough, the perturbed TLS problem
\begin{eqnarray}\label{eq:pert TLS}
{\rm minimize}&&\quad\left\|[A+\Delta A, ~b+\Delta b]-[\widehat{A}, ~\widehat{b}]\right\|_F\\
\mbox{subject to}&&\quad \widehat{b} \in {\sf R}(\widehat{A}),\, [\widehat{A}, ~\widehat{b}]\in \R^{m\times(n+1)},\nonumber
\end{eqnarray}
has  a unique TLS solution $x+\Delta x$. The {\em absolute} normwise condition number \cite{Baboulin2011SIMAX} of $\Psi$ is defined by
$$
\cond(L, A,b)=\lim_{\epsilon \rightarrow
0}\sup_{\substack{\left\|[\Delta A,~ \Delta b]\right\|_F \le\epsilon  }}\|L \Delta x\|_2  = \max_{[\Delta A,~\Delta b] \neq 0}\frac{\|L \sd {\Psi}(A,\,b)\cdot (\sd A,\, \sd b)\|_2}{\|[\Delta A,~\Delta b]\|_F},
$$
where $x+\Delta x$ is the TLS solution of \eqref{eq:pert TLS}, $\|A\|_2$ is the spectral norm of $A$ and the last equality is from \cite{Rice}.   Baboulin and  Gratton \cite{Baboulin2011SIMAX} derived the exact SVD-based expression of $\kappa(L, A,b)$ as follows
\begin{align}\label{pro:cond_Baboulin}
\cond(L, A,b)&=\sqrt{1+\|x\|_2^2}\left\|L \widetilde{V} D'[\widetilde{V}^\top ~{0}]V[D'' ~{ 0}]^\top\right\|_2,
\end{align}
where
\begin{align*}
  D'&=\diag\left((\widetilde{\sigma}_1^2-\sigma_{n+1}^2)^{-1},\ldots,(\widetilde{\sigma}_n^2-\sigma_{n+1}^2)^{-1}\right),\nonumber\\
D''&=\diag\left((\sigma_1^2+\sigma_{n+1}^2)^{\frac{1}{2}},\ldots,(\sigma_n^2+\sigma_{n+1}^2)^{\frac{1}{2}}\right).\nonumber
\end{align*}
The relative normwise condition number corresponding to $\cond(L, A,b) $ in \eqref{pro:cond_Baboulin} can be defined by
\begin{equation}\label{eq:relative norm}
\cond^{\rel}(L,A,b)= \lim_{\epsilon \rightarrow
0}\sup_{\substack{\left\|[\Delta A,~ \Delta b]\right\|_F \le\epsilon \left\|[A,~ b]\right\|_F    }}\frac{\|L \Delta x\|_2}{\|L x\|_2}=\frac{\cond(L,A,b)\|[A,~b]\|_F}{\| L x\|_2}.
\end{equation}

In the following, if $A\in \R^{m\times n}$ and $B \in \R^{p \times q}$, then the
{\em Kronecker product} $A \otimes B\in\R^{mp\times nq}$ is
defined by $A \otimes B = \left[a_{ij}B\right] \in \R^{mp\times nq}$ \cite{Graham1981book}. Zhou et al.~\cite{Zhou} defined and derived the relative mixed and componentwise condition numbers  as follows,
\begin{align}\label{eq:mixed}
  m(A,b)&=\lim_{\epsilon \rightarrow
0}\sup_{\substack{|\Delta A| \le\epsilon  |A|,\atop |\Delta b|\leq \epsilon|b| }}\frac{\|\Delta x\|_\infty}{\|x\|_\infty}=\frac{\left\|\left|M+N\right|\begin{bmatrix}
    |{\sf vec}(A)|\cr |b|
  \end{bmatrix}\right\|_\infty}{\|x\|_\infty},\\
  c(A,b)&=\lim_{\epsilon \rightarrow
0}\sup_{\substack{|\Delta A| \le\epsilon  |A|,\atop |\Delta b|\leq \epsilon |b| }}\left\|\frac{\Delta x}{x}\right\|_\infty =\left\|D_x^\dagger\left|M+N\right|
  \begin{bmatrix}
    |{\sf vec}(A)|\cr |b|
  \end{bmatrix}\right\|_\infty,\nonumber
\end{align}
where we denote by $|A|=(|a_{ij}|)$ for a given matrix $A$, $|a| \leq |b|$ represents $|a_i| \leq |b_i|$ for two vectors $a=[a_1,a_2, \ldots, a_n]^\top$ and $b=[b_1,b_2, \ldots, b_n]^\top$,
\begin{align*}
M & = \begin{bmatrix}P^{-1}\otimes b^\top-x^\top \otimes(P^{-1}A^\top )-P^{-1}\otimes(Ax)^\top &P^{-1}A^\top \end{bmatrix},\\
N & = 2\sigma_{n+1}P^{-1}x(v_{n+1}^\top \otimes u_{n+1}^\top),
\end{align*}
 the notation ${\sf vec}(A)$ stacks columns of $A$ one by one to a column vector, $D_x^\dagger$ is the Moore-Penrose inverse \cite{GolubVanLoan2013Book} of the diagonal matrix $D_x$  with $(D_x)_{ii}=x_i$, $\|\cdot\|_\infty$ is the infinity norm and the symbol $\frac{y}{x}$ denotes the componentwise division of $x,\,y$, assuming that if $x_i=0$ for some index $i$ then $y_i$ should be zero.

The structured condition numbers for the TLS problem with linear structures were studied by
Li and Jia in \cite{LiJia2011} We first review the structured perturbation results given in \cite{LiJia2011}. Recall when $A\in \cal S$, $A$ can be determined by \eqref{eq:A linear}.
Denote
\begin{align}
\cM^{st}&=\begin{bmatrix}
	\vect(S_1) &\cdots  & \vect(S_q)
\end{bmatrix},\quad \cM^{st}_{A, b} = \begin{bmatrix}
  \cM^{st} & 0\cr
 0  &I_m
\end{bmatrix}, \notag \\
K &=P^{-1}
\left(2 A^\top \frac{rr^\top }{\|r\|_2^2} G(x)-A^\top G(x) + \begin{bmatrix}
	I_n \otimes r^\top  & 0
\end{bmatrix} \right ),\quad  G(x)=\begin{bmatrix}
	x^\top &-1
\end{bmatrix} \otimes I_m.  \label{eq:K}
\end{align}
  The structured mixed condition number $m_s(A,b)$ is characterized
as
\begin{eqnarray}\label{eq:str mixed}
m_s(A,b) &=& \lim_{\epsilon \rightarrow
0}\sup_{\substack{ |\Delta A| \le\epsilon  |A|,\, |\Delta b| \le\epsilon  |b|  \\ \Delta A\in {\cal S}}}\frac{\|\Delta x\|_\infty }{\|x\|_\infty } = \frac{\left\|\left |K \cM^{st}_{A, b} \right| \begin{bmatrix}
	|a| \\ |b|
\end{bmatrix} \right\|_\infty  }{\|x\|_\infty },
\end{eqnarray}
and they also proved that $m_s(A,b) \leq  m(A,b)$.

\subsection{Dual techniques}


Let  $\cX$  and  $\cY$  be the Euclidean spaces  equipped scalar products $\langle \cdot ,\cdot \rangle_{\cX}$ and  $\langle\cdot,\cdot \rangle_{\cY}$ respectively, and we consider  a linear operator $ {\cal L}:\, \cX \rightarrow \cY $. We denote $\|\cdot\|_{\cX} $ and $\|\cdot\|_{\cY}$ by the corresponding norms of $\cX$  and  $\cY$,  respectively. The well-known adjoint operator and dual norm are defined as follows.

\begin{definition}\label{def:adj dual}
The adjoint operator of  $\cal L$,\, ${\cal L}^{\ast}:\cY \rightarrow \cX $  is defined by
\[
\langle \bv, {\cal L}\bu \rangle_{\cY}=\langle {\cal L}^{\ast}\bv,\bu\rangle_{\cX}
\]
\end{definition}
where $\bu\in \cX $ and $\bv \in\cY $.  The dual norm $ \|\cdot\|_{\cX^{\ast}}$  of  $ \|\cdot\|_{\cX} $  is defined by
\[
\|\bu\|_{\cX^{\ast}}=\max_{\bw\neq0}\frac{ \langle  \bu,\bw \rangle _{\cX}}{\|\bw\|_{\cX}}
\]
and the dual norm $\|.\|_{\cY^{\ast}}$ can be defined similarly.

Using the canonical scalar product in $ \R^{n} $, the corresponding dual norms  with respect to the common vector norms are  given by :
\[
\|\cdot \|_{{1}^\ast}=\|\cdot\|_{\infty}, \quad  \|\cdot \|_{{\infty}^\ast}=\|\cdot\|_{1}\quad \mbox{and } \quad \|\cdot\|_{{2}^\ast}=\|\cdot\|_{2}.
\]
Let the scalar product $ \langle A,B\rangle = \trace(A^\top B)$ be defined in  $\R^{m\times n}$, where $\trace(A)$ is the trace of $A$. Then it is easy to see that   $ \|A\|_{F}\ast=\|A\|_{F}$ since  $\trace(A^\top A)=\|A\|_{F}^{2}$.

For the linear operator $\cal L$ from $\cX$  to $\cY$, let $\|\cal L\|_{\cX,\cY}$  be the operator norm induced by
the norms  $\|\cdot\|_{\cX}$ and  $\|\cdot\|_{\cY}$. Consequently, for linear operators from $\cY$ to $\cX$, the norm
induced from the dual norms $\| \cdot \|_{\cX^\ast}$ and
$\| \cdot \|_{\cY^\ast}$, is denoted by $\| \cdot \|_{\cY^\ast,\cX^\ast}$.

We have the following  result for the adjoint operators and dual norms \cite{BaboulinGratton2009BIT}.
\begin{lemma}\label{lemma:adjoint}
With notations above, the following property $$
\|\cal L\|_{\cX,\cY}=\|{\cal L}^{\ast}\|_{\cY^\ast,\cX^\ast}
$$
holds.
\end{lemma}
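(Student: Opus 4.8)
The statement is the classical fact that a linear operator and its adjoint share the same induced norm, and the argument is a short chain of equalities that threads the definition of the adjoint through the definitions of the dual norms. The plan is: (i) first record the ``norm--duality'' identity reconstructing a norm from its dual; then (ii) unfold the operator norm of ${\cal L}$, substitute that identity, interchange the resulting pair of maxima, apply Definition~\ref{def:adj dual}, and recognize the inner maximum as a dual norm.

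For step (i), I would show that for every $\mathbf{y}\in\cY$,
\[
\|\mathbf{y}\|_{\cY}=\max_{\bv\neq 0}\frac{\langle \bv,\mathbf{y}\rangle_{\cY}}{\|\bv\|_{\cY^\ast}},
\]
and likewise with $\cX$ in place of $\cY$. The inequality $\|\mathbf{y}\|_{\cY}\ge\langle\bv,\mathbf{y}\rangle_{\cY}/\|\bv\|_{\cY^\ast}$ for every $\bv\neq 0$ is built into the dual norm: choosing $\bw=\mathbf{y}$ in $\|\bv\|_{\cY^\ast}=\max_{\bw\neq0}\langle\bv,\bw\rangle_{\cY}/\|\bw\|_{\cY}$ yields $\langle\bv,\mathbf{y}\rangle_{\cY}\le\|\bv\|_{\cY^\ast}\|\mathbf{y}\|_{\cY}$. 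The reverse inequality requires exhibiting a nonzero $\bv$ for which equality holds, i.e.\ a supporting functional of the unit ball of $\cY$ at $\mathbf{y}/\|\mathbf{y}\|_{\cY}$; since the spaces here are finite dimensional (Euclidean), such a $\bv$ is provided by Hahn--Banach, or concretely by compactness of the dual unit ball together with a separating-hyperplane argument. The same finite dimensionality guarantees that every maximum appearing below is attained (the relevant ratios are degree-zero homogeneous, so one restricts to a compact unit sphere).

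For step (ii), unfolding the operator norm and inserting the identity of step (i) with $\mathbf{y}={\cal L}\bu$ gives
\[
\|{\cal L}\|_{\cX,\cY}=\max_{\bu\neq 0}\frac{\|{\cal L}\bu\|_{\cY}}{\|\bu\|_{\cX}}=\max_{\bu\neq 0}\max_{\bv\neq 0}\frac{\langle \bv,{\cal L}\bu\rangle_{\cY}}{\|\bv\|_{\cY^\ast}\,\|\bu\|_{\cX}}.
\]
Interchanging the two maxima (both are attained, so the double maximum is simply the maximum over the pair), substituting $\langle\bv,{\cal L}\bu\rangle_{\cY}=\langle{\cal L}^{\ast}\bv,\bu\rangle_{\cX}$ from Definition~\ref{def:adj dual}, and pulling the nonnegative factor $1/\|\bv\|_{\cY^\ast}$ out of the inner maximum, one obtains
\[
\|{\cal L}\|_{\cX,\cY}=\max_{\bv\neq 0}\frac{1}{\|\bv\|_{\cY^\ast}}\max_{\bu\neq 0}\frac{\langle {\cal L}^{\ast}\bv,\bu\rangle_{\cX}}{\|\bu\|_{\cX}}=\max_{\bv\neq 0}\frac{\|{\cal L}^{\ast}\bv\|_{\cX^\ast}}{\|\bv\|_{\cY^\ast}}=\|{\cal L}^{\ast}\|_{\cY^\ast,\cX^\ast},
\]
where the middle equality is the definition of the dual norm $\|{\cal L}^{\ast}\bv\|_{\cX^\ast}$ (with $\bu$ playing the role of $\bw$) and the last equality is the definition of the norm of ${\cal L}^{\ast}:\cY^\ast\to\cX^\ast$ induced by the dual norms.

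The only step beyond bookkeeping is the reverse inequality in step (i)---the attainment of equality in the dual-norm bound---and I expect that to be the sole obstacle; it is a standard consequence of Hahn--Banach (or of convexity and compactness of the unit ball) in the finite-dimensional setting assumed throughout. Once it is in hand, the displayed chain of equalities is exactly the claim.
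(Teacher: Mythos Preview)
Your argument is correct: the two-step plan---recovering a norm from its dual via a norming functional, then threading the adjoint identity through the double maximum---is exactly the standard proof of this classical fact, and every step is sound in the finite-dimensional setting assumed here. Note, however, that the paper does not actually prove Lemma~\ref{lemma:adjoint}; it merely quotes the result from \cite{BaboulinGratton2009BIT}, so there is no ``paper's own proof'' to compare against. Your write-up therefore supplies what the paper omits.
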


As pointed in \cite{BaboulinGratton2009BIT}, it may be desirable  to compute $ \|{\cal L}^{\ast}\|_{\cY^\ast,\cX^\ast} $ instead of $\|\cal L\|_{\cX,\cY}$
when the dimension of the Euclidean space $\cY^{\ast}$  is  lower  than  $\cX$ because it implies a maximization over a space of smaller dimension.

Now, we consider a product space  $ \cX=\cX_{1}\times \cdots\times \cX_{s} $  where each Euclidean space  $\cX_{i}$ is equipped with the scalar product  $\langle\cdot,\cdot \rangle_{\cX_{i}}$ and the corresponding norm $\|\cdot\|_{\cX_{i}}$. In  $\cX$, we define the following scalar product
\[
\langle (\bu_{1},\cdots,\bu_{s}),(\bv_{1},\cdots,\bv_{s})\rangle=\langle \bu_{1},\bv_{1}\rangle_{\cX_{1}}+\cdots+\langle \bu_{s},\bv_{s}\rangle_{\cX_{s}},
\]
and the corresponding product norm
\[
\|(\bu_{1},\cdots,\bu_{s})\|_{v}=v(\|\bu_{1}\|_{\cX_{1}},\cdots,\|\bu_{s}\|_{\cX_{s}}),
\]
where   $v$  is an absolute norm on  $\R^{s}$, that is  $ v(|\bu|)=v(\bu)$, for any  $\bu\in \R^{s}$, see \cite{Higham2002Book} for details. We denote   $v^{\ast}$ is the dual norm of $v$
with respect to the canonical inner-product of $ \R^{s} $  and we are interested in determining the dual $ \|\cdot\|_{v^\ast}$ of the
product norm $\|\cdot\|_{v}$  with respect to the scalar product of  $\cX$. The following result can be found in \cite{BaboulinGratton2009BIT}.

\begin{lemma}\label{lemmaProductNorm}
The dual of the product norm can be expressed by
\[
\|(\bu_{1},\cdots,\bu_{s})\|_{v^\ast}=v(\|\bu_{1}\|_{\cX_{1^\ast}},\cdots,\|\bu_{s}\|_{\cX_{s^\ast}}).
\]
\end{lemma}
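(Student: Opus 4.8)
\emph{Proof plan.} This is the standard description of the dual of a norm on a product of normed spaces, so I would prove it directly from the definition of the dual norm, establishing the corresponding upper and lower bounds separately. Write $\bu=(\bu_1,\dots,\bu_s)\in\cX$ and let $\bw=(\bw_1,\dots,\bw_s)\in\cX$ be an arbitrary nonzero test element. Since the scalar product on $\cX$ is the sum of the block scalar products, $\langle\bu,\bw\rangle=\sum_{i=1}^s\langle\bu_i,\bw_i\rangle_{\cX_i}$, while $\|\bw\|_v=v(\eta)$ with $\eta:=(\|\bw_1\|_{\cX_1},\dots,\|\bw_s\|_{\cX_s})^\top\in\R^s$. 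Let $\xi:=(\|\bu_1\|_{\cX_{1^\ast}},\dots,\|\bu_s\|_{\cX_{s^\ast}})^\top\in\R^s$ be the (nonnegative) vector of block dual norms appearing on the right-hand side; recalling that $v^\ast$ denotes the canonical dual of $v$ on $\R^s$, the identity to be proved reads $\|\bu\|_{v^\ast}=v^\ast(\xi)$.

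For the upper bound, I would bound each block by the defining inequality of the dual norm on $\cX_i$, namely $\langle\bu_i,\bw_i\rangle_{\cX_i}\le\|\bu_i\|_{\cX_{i^\ast}}\|\bw_i\|_{\cX_i}$, which gives $\langle\bu,\bw\rangle\le\xi^\top\eta$, and then apply the generalized Cauchy--Schwarz inequality linking $v$ and $v^\ast$ on $\R^s$, which gives $\xi^\top\eta\le v^\ast(\xi)\,v(\eta)$. Dividing by $v(\eta)=\|\bw\|_v>0$ and taking the supremum over $\bw\neq0$ yields $\|\bu\|_{v^\ast}\le v^\ast(\xi)$.

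For the reverse inequality, I would exhibit an explicit maximizer by separating the choice of direction inside each block from the distribution of weight across blocks. For each $i$, pick $\bw_i^{(0)}\in\cX_i$ with $\|\bw_i^{(0)}\|_{\cX_i}=1$ and $\langle\bu_i,\bw_i^{(0)}\rangle_{\cX_i}=\|\bu_i\|_{\cX_{i^\ast}}$; such a vector exists because the supremum defining $\|\cdot\|_{\cX_{i^\ast}}$ is attained on the compact unit sphere of the finite-dimensional space $\cX_i$. Next, pick $\eta^{(0)}\in\R^s$ with $v(\eta^{(0)})=1$ and $\xi^\top\eta^{(0)}=v^\ast(\xi)$; since $\xi\ge0$ entrywise and $v$ is absolute, replacing $\eta^{(0)}$ by its entrywise absolute value preserves $v(\eta^{(0)})=1$ and cannot decrease $\xi^\top\eta^{(0)}$, so we may assume $\eta^{(0)}\ge0$. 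Finally, set $\bw_i:=\eta_i^{(0)}\bw_i^{(0)}$; then $\|\bw_i\|_{\cX_i}=\eta_i^{(0)}$, so $\|\bw\|_v=v(\eta^{(0)})=1$, while $\langle\bu,\bw\rangle=\sum_{i=1}^s\eta_i^{(0)}\|\bu_i\|_{\cX_{i^\ast}}=\xi^\top\eta^{(0)}=v^\ast(\xi)$. Hence $\|\bu\|_{v^\ast}\ge v^\ast(\xi)$, and together with the upper bound this is the asserted formula.

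The argument is essentially routine; the only points that deserve care are the attainment of the two suprema (which is where finite-dimensionality is used) and the appeal to the absoluteness of $v$ to reduce to a nonnegative weight vector $\eta^{(0)}$, which is precisely the step that makes the across-block optimization separable from the within-block choices. If a shorter route is preferred, one may instead quote the corresponding result in \cite{BaboulinGratton2009BIT}.
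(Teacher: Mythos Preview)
Your argument is correct and complete, but note that the paper does not actually prove this lemma: it simply states the result and attributes it to \cite{BaboulinGratton2009BIT}. So there is no proof in the paper to compare against; your closing remark about quoting that reference is in fact precisely what the paper does.

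One minor point worth flagging: as written in the paper, the right-hand side of the identity has $v(\cdot)$ rather than $v^\ast(\cdot)$. You silently read it as $v^\ast$, which is the correct version---and indeed the paper's own application of the lemma just below it (deriving $\|\sd\bu\|_{c^\ast}$ with $v=\|\cdot\|_\infty$ and using $\|\cdot\|_{\infty^\ast}=\|\cdot\|_1$) confirms that $v^\ast$ is intended. Your proof establishes exactly this corrected statement.
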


In the following we apply adjoint operators
and dual norms to derive the explicit expressions for the condition numbers of TLS. We can view the Euclidean space $\cX$ with
norm $\| \cdot \|_\cX$ as the space of the input data in TLS and $\cY$ with norm $\| \cdot \|_\cY$ as the space
of the solution in TLS. Then
the function $\Psi$ in (\ref{eq:g dfn}) is an operator from $\cX$
to $\cY$ and the condition number is the measurement of the
sensitivity of $\Psi$ to the perturbation in its input data.

From \cite{Rice}, if $\Psi$ is Fr{\'e}chet differentiable in neighborhood of  $\bu \in \cX$, then the absolute condition number of  $\Psi$ at $\bu\in \cX$  is given by
\[
\kappa(\bu)=\|\sd \Psi(\bu)\|_{\cX,\cY}=
\max_{\| \bz \|_\cX = 1} \|\sd \Psi( \bu ) \cdot \bz \|_\cY ,
\]
where $ \|\cdot\|_{\cX,\cY} $  is the operator norm induced by the norms  $ \|\cdot\|_{\cX} $ and  $ \|\cdot\|_{\cY}$ and $\sd \Psi(\bu )$ is the Fr{\'e}chet derivative of $\Psi$ at $\bu$. If $ \Psi(\bu) $ is nonzero, the \emph{relative condition number} of  $\bu$ at  $\bu\in \cX$ is defined as
\[
\kappa^{\rel}(\bu)=\kappa(\bu)\frac{\|\bu\|_{\cX}}{\|\Psi(\bu)\|_{\cY}}.
\]
The expression of  $\kappa(\bu)$  is related to the operator norm of the linear operator  $\sd \Psi(\bu)$. Applying
Lemma~\ref{lemma:adjoint}, we have the following expression of
$\kappa(\bu)$ in terms of adjoint operator and dual norm:
\begin{equation} \label{eqnK}
 \kappa(\bu)=\max_{\|\sd \bu\|_{\cX}=1} \|\sd \Psi(\bu)\cdot \sd \bu\|_{\cY}=\max_{\|\bz \|_{\cY^\ast}=1}\|\sd \Psi(\bu)^{\ast}\cdot \bz\|_{\cX^\ast}.
\end{equation}

Now we consider the componentwise metric on a data space $ \cX=\R^{n}$. For any given $ \bu\in \cX$,  the subset $ \cX_{\bu} \in \cX$ is a set of all elements $ \sd \bu\in \cX$ satisfying that $\sd \bu_{i}=0$  whenever  $\bu_{i}=0$, $1\leq i\leq n$. Thus in a componentwise perturbation analysis, we measure the perturbation  $\sd \bu\in \cX_{\bu}$ of  $\bu$  using the following componentwise norm with respect to $\bu$
\begin{equation}\label{eq:rr}
 \|\sd \bu\|_{c}=\min\{\omega,|\sd \bu_{i}|\leq \omega |\bu_{i}|,i=1,\ldots, n\}.
\end{equation}
Equivalently, it is easy to see that  the componentwise relative norm has the following property
\begin{equation}\label{eqnCNorm}
\|\sd \bu\|_{c}=\max\left\{\frac{|\sd \bu_{i}|}{|\bu_{i}|},\bu_{i}\neq 0\right\}=\left\|\left(\frac{|\sd \bu_{i}|}{|\bu_{i}|}\right)\right\|_{\infty},
\end{equation}
where $\sd\bu \in \cX_\bu$.

In the following we consider the dual norm $\| \cdot \|_{c^\ast}$
of the componentwise norm $\| \cdot \|_c$.
Let the product space $\cX$
be $\mathbb{R}^n$, each $\cX_i$ be $\mathbb{R}$,
and the absolute norm $v$ be $\| \cdot \|_{\infty}$.
Setting the norm $\| \sd \bu_i \|_{\cX_i}$ in $\cX_i$ to
$| \sd \bu_i | / | \bu_i |$ when $\bu_i \neq 0$, from
Definition~\ref{def:adj dual}, we have the dual norm
\[
\| \sd \bu_i \|_{\cX_i^\ast} =
\max_{\bz \neq 0} \frac{| \sd \bu_i \cdot \bz |}{\| \bz \|_{\cX_i}} =
\max_{\bz \neq 0} \frac{| \sd \bu_i \cdot \bz |}{|\bz| / |\bu_i|} =
|\sd \bu_i| \, |\bu_i| .
\]
Applying Lemma~\ref{lemmaProductNorm} and (\ref{eqnCNorm}) and recalling
$\| \cdot \|_{\infty^*} = \| \cdot \|_1$, we derive the explicit expression of the dual norm
\begin{equation} \label{eqnDualC}
\| \sd \bu \|_{c^\ast} =
\| (\|\sd \bu_1\|_{\cX^\ast} ,..., \|\sd \bu_n\|_{\cX^\ast}) \|_{\infty^\ast} =
\| (|\sd \bu_1| \, |\bu_1| ,..., |\sd \bu_n| \, |\bu_n|) \|_1 .
\end{equation}

Because of the condition $\| \sd \bu \|_\cX = 1$ in the
condition number $\kappa(\bu)$ in (\ref{eqnK}), whether
$\sd \bu$ is in $\cX_\bu$ or not, the expression
of the condition number $\kappa(\bu)$ remains valid. Indeed,
if $\sd \bu \not\in \cX_\bu$, that is, $\sd \bu_i\neq 0$
 while $\bu_i=0$ for some $i$, then $\| \sd \bu \|_c = \infty$.
Consequently, such perturbation $\sd \bu$ is
excluded from the calculation of $\kappa(\bu)$. Following
(\ref{eqnK}), we have the following lemma on the condition
number in adjoint operator and dual norm.

\begin{lemma} \label{lemmaK}
Using the above notations and the componentwise norm defined
in (\ref{eqnCNorm}), the condition number $\kappa(\bu)$ can be
expressed by
\[
\kappa(\bu) =
\max_{\| \bz \|_{\cY^\ast} = 1}
\| (\sd \Psi(\bu))^* \cdot \bz \|_{c^\ast} ,
\]
where $\| \cdot \|_{c^\ast}$ is given by (\ref{eqnDualC}).
\end{lemma}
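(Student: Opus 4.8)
The plan is to obtain the asserted formula for $\kappa(\bu)$ directly from the abstract identity (\ref{eqnK}), after reinterpreting the componentwise norm $\|\cdot\|_c$ as a genuine norm on the relevant subspace. First I would record the observation made just above the statement: in the maximization defining $\kappa(\bu)$ in (\ref{eqnK}), the constraint $\|\sd\bu\|_{\cX}=\|\sd\bu\|_c=1$ can only be satisfied by $\sd\bu\in\cX_\bu$, since by (\ref{eq:rr})--(\ref{eqnCNorm}) any $\sd\bu$ with some $\sd\bu_i\neq0$ while $\bu_i=0$ has $\|\sd\bu\|_c=+\infty$. Hence the supremum is effectively taken over $\sd\bu\in\cX_\bu$, and $\kappa(\bu)$ equals the operator norm of the Fr\'echet derivative $\sd\Psi(\bu)$ regarded as a linear map from $(\cX_\bu,\|\cdot\|_c)$ to $(\cY,\|\cdot\|_{\cY})$; on $\cX_\bu$ the functional $\|\cdot\|_c$ is a bona fide (weighted $\ell_\infty$) norm, so the adjoint/dual-norm machinery of Section~\ref{sec:pre} applies to it.

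Next I would apply Lemma~\ref{lemma:adjoint} to this restricted operator, giving
\[
\kappa(\bu)=\max_{\|\bz\|_{\cY^\ast}=1}\big\|\,(\sd\Psi(\bu))^{\ast}\cdot\bz\,\big\|_{c^\ast},
\]
where $\|\cdot\|_{c^\ast}$ is the dual, with respect to the canonical scalar product, of $\|\cdot\|_c$ on $\cX_\bu$. Strictly, the adjoint of the restricted map is $\pi\,(\sd\Psi(\bu))^{\ast}$ with $\pi$ the orthogonal projection of $\cX$ onto $\cX_\bu$; but the explicit dual norm (\ref{eqnDualC}) assigns weight $|\bu_i|=0$ to precisely the coordinates annihilated by $\pi$, so $\|\pi w\|_{c^\ast}=\|w\|_{c^\ast}$ for every $w\in\cX$ when $\|\cdot\|_{c^\ast}$ is evaluated by (\ref{eqnDualC}), and the projection may be dropped in favour of the full adjoint $(\sd\Psi(\bu))^{\ast}$. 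The closed form (\ref{eqnDualC}) for $\|\cdot\|_{c^\ast}$ has already been derived from Lemma~\ref{lemmaProductNorm} together with $\|\cdot\|_{\infty^\ast}=\|\cdot\|_1$, so nothing more is needed there; and both maxima are genuinely attained, the relevant unit spheres being compact in the finite-dimensional spaces and the maps continuous.

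The only delicate point is the bookkeeping: $\|\cdot\|_c$ is \emph{not} a norm on all of $\cX$ --- it is a norm only on the subspace $\cX_\bu$ and takes the value $+\infty$ elsewhere --- so Lemma~\ref{lemma:adjoint} has to be invoked on $\cX_\bu$, not on $\cX$, and one must verify that passing to this subspace is exactly what is encoded by the vanishing weights $|\bu_i|$ in (\ref{eqnDualC}) (equivalently, that discarding $\pi$ is harmless). Once that identification is made explicit, the statement follows with no further computation.
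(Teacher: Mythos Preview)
Your proposal is correct and follows essentially the same route as the paper: restrict to $\cX_\bu$ via the observation that $\|\sd\bu\|_c=\infty$ outside this subspace, then invoke the adjoint/dual-norm identity (\ref{eqnK}). The paper's own justification is just the paragraph preceding the lemma together with the phrase ``Following~(\ref{eqnK})''; your treatment is in fact more careful, since you make explicit why the projection $\pi$ onto $\cX_\bu$ can be dropped when the dual norm is evaluated via (\ref{eqnDualC}).
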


In the next section, based on Lemma \ref{lemmaK}, the explicit expressions for condition numbers can be deduced, where we measure the errors for the solution using componentwise perturbation analysis, while for the input data, we can measure the error either
componentwise or normwise.  However, regardless of the norms chosen
in the solution space, we always use the componentwise norm in
the data space.

\section{Mixed and componentwise condition numbers for TLS}\label{sec:cond}

In this section we will derive the explicit condition numbers expressions for a linear function of the solution of TLS by means of the dual techniques under componentwise perturbations, which is introduced in \cite{BaboulinGratton2009BIT}. Both the unstructured and structured condition number expressions are derived. Moreover,  our condition numbers can recover the previous results on the mixed and componentwise condition numbers \cite{LiJia2011,Zhou} when we take $L=I_n$. Also sharp upper bounds for the unstructured mixed and componentwise condition numbers are obtained. Through using the already computed SVD for solving TLS, we can obtain SVD-based formulae for condition numbers and their upper bounds.

\subsection{Unstructured condition number expressions of TLS via dual techniques}

In this subsection we will derive the explicit expressions of unstructured condition numbers for TLS through dual techniques stated in the previous section.  Also we prove that the derived expressions and the previous ones \cite{Zhou} are mathematically equivalent. Sharp upper bounds absence of Kronecker product for condition numbers are given. Before that, we need the following lemma.

Using the definition of the adjoint operator and
the classical definition of the scalar
product in the data space $\R^{m\times n} \times \R^{m\times 1} $, an explicit
expression of the adjoint operator of the above $J(\sd A,\, \sd b)$ is given in the following lemma.

\begin{lemma}\label{lemmaDualJ}
The adjoint of operator of the Fr{\'e}chet derivative $J(\sd A,\, \sd b)$ in \eqref{eqnJ} is given by
\begin{align*}
J^{\ast}&:\R^{k}\rightarrow \R^{m\times n} \times \R^{m\times 1} \\
 &u\mapsto \left( r u^\top\call P^{-1} - \left[A^\top+\frac{2xr^\top}{1+x^\top x}\right] ^\top P^{-1}\call^\rt u x^\top, \, \left[A^\top +\frac{2xr^\top}{1+x^\top x}\right]^\top P^{-1} \call^\top u\right).
\end{align*}
\end{lemma}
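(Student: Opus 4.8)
The plan is to compute the adjoint directly from Definition~\ref{def:adj dual}, using the scalar products $\langle u, v\rangle = u^\top v$ on $\R^k$ and $\langle (\sd A,\sd b),(\sd A',\sd b')\rangle = \trace((\sd A)^\top \sd A') + (\sd b)^\top \sd b'$ on the data space $\R^{m\times n}\times\R^{m\times 1}$. For an arbitrary $u\in\R^k$ and an arbitrary perturbation $(\sd A,\sd b)$, I would expand $\langle u, J(\sd A,\sd b)\rangle = u^\top\big(J_1(\sd A) + J_2(\sd b)\big)$ using the formula \eqref{eqnJ}, and then manipulate each term into the form $\trace((\text{something})^\top \sd A) + (\text{something})^\top \sd b$. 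Reading off the two ``somethings'' gives $J^\ast u$.

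First I would handle the easy term $J_2(\sd b) = \call P^{-1}\big[A^\top + \tfrac{2xr^\top}{1+x^\top x}\big]\sd b$: then $u^\top J_2(\sd b) = \big(\big[A^\top+\tfrac{2xr^\top}{1+x^\top x}\big]^\top P^{-\top}\call^\top u\big)^\top \sd b$, and since $P = A^\top A - \sigma_{n+1}^2 I_n$ is symmetric, $P^{-\top} = P^{-1}$, which yields the second component of $J^\ast u$ as claimed. For the $\sd A$ part, $u^\top J_1(\sd A) = u^\top\call P^{-1}(\sd A)^\top r - u^\top\call P^{-1}\big[A^\top+\tfrac{2xr^\top}{1+x^\top x}\big](\sd A)x$. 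The first piece equals $r^\top \sd A\, P^{-1}\call^\top u = \trace\big((r u^\top \call P^{-1})^\top \sd A\big)$ after using $\alpha = \trace(\alpha)$ for scalars and cyclicity of trace. The second piece equals $\trace\big(\big(\big[A^\top+\tfrac{2xr^\top}{1+x^\top x}\big]^\top P^{-1}\call^\top u\, x^\top\big)^\top \sd A\big)$ by the same trace manipulations (writing the scalar $u^\top\call P^{-1}[\cdots](\sd A)x$ as a trace and cycling $x$ to the front). Collecting the $\sd A$-coefficients with the correct sign gives the first component of $J^\ast u$.

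The main technical point — really the only place where care is needed — is the bookkeeping of transposes and the repeated use of the identities $\langle u, M w\rangle = \trace\big((\text{rank-one thing})^\top \sd A\big)$ via $\mathrm{tr}(AB)=\mathrm{tr}(BA)$, together with the symmetry $P^\top = P$ so that $P^{-\top}$ collapses to $P^{-1}$. I do not expect any genuine obstacle; the identity $\sd\Psi(A,b)\cdot(\sd A,\sd b) = J(\sd A,\sd b)$ is already supplied by Lemma~\ref{lemma:gLSQI}, so the argument is a routine but slightly lengthy computation of moving terms across the scalar product. One should double-check that the two scalar-product conventions are used consistently (trace inner product on the matrix block, Euclidean on the vector block), since that is what pins down the exact shape of $J^\ast$.
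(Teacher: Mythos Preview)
Your proposal is correct and follows essentially the same route as the paper: both compute $\langle u, J(\sd A,\sd b)\rangle$ directly from \eqref{eqnJ}, split into the $\sd A$ and $\sd b$ parts, and use trace manipulations (scalar $=$ trace, cyclicity, $P^\top=P$) to read off the two components of $J^\ast u$. The only cosmetic difference is that you explicitly flag the symmetry $P^{-\top}=P^{-1}$, which the paper uses silently.
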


\begin{proof}
Using \eqref{eqnJ} and the definition of the scalar product in the matrix space, for any $u\in \R^{k}$, we have
\begin{align*}
\langle u,J_{1}(u)\rangle=&u^\top \call P^{-1}\left[(\sd A)^\top r- \left[A^\top+\frac{2xr^\top}{1+x^\top x}\right]\sd A x \right]\\
=&\trace\left(ru^\top\call P^{-1}
(\sd A)^\rt \right)-\trace \left(xu^\top \call\ P^{-1}\left[A^\top +\frac{2xr^\top}{1+x^\top x}\right]\sd A\right)\\
=&\left\langle r u^\top\call P^{-1} -\left [A^\top+\frac{2xr^\top}{1+x^\top x}\right]^\top P^{-1}\call^\rt u x^\top, \sd A\right\rangle.
\end{align*}
For the second part of the adjoint of the derivative  $J$, we have
\begin{align*}
\langle u,J_{2}(u)\rangle=&u^\top \call P^{-1}\left[A^\top+\frac{2xr^\top}{1+x^\top x}\right]\sd d\\
=&\left\langle \left[A^\top +\frac{2xr^\top}{1+x^\top x}\right] ^\top P^{-1} \call^\top u , \sd b\right\rangle.
\end{align*}
Let
\begin{align*}
J_1^*({u})& =r u^\top\call P^{-1} - \left[A^\top+\frac{2xr^\top}{1+x^\top x}\right]^\top  P^{-1} \call^\rt u x^\top,\quad J_2^*({u}) =\left [A^\top+\frac{2xr^\top}{1+x^\top x}\right]^\top P^{-1} \call^\rt u \\
\end{align*}
then $$
\langle J^*({u}),\ (\sd A,\, \sd b) \rangle =
\langle (J_1^*({u}),\, J_2^*({u}))) , \
(\sd A,\, \sd b) \rangle = \langle {u},\ J(\sd A,\, \sd b)  \rangle,
$$
which completes the proof. \hfill $\Box$
\end{proof}

In fact, Lemma \ref{lemmaDualJ} establishes the same expressions of the adjoint operator of $J$ as that in Proposition 3 of \cite{Baboulin2011SIMAX}. However, we use a different proof here to avoid forming the explicit Kronecker product-based matrix expression of $J$, which appeared in Proposition 2 of \cite{Baboulin2011SIMAX}.

After obtaining an explicit expression of the adjoint operator
of the Fr{\'e}chet derivative,  we now give an explicit expression
of the condition number $\kappa$ (\ref{eqnK}) in terms of
the dual norm in the solution space in the following theorem.


\begin{theorem} \label{thmK}
The condition number for the TLS
problem can be expressed by
\[
\kappa = \max_{\| {u} \|_{\cY^\ast} = 1}
\left\| [ \bfH D_A \ \ \bfJ D_{{b}}\ \ ]^{\top}
\call^\top\right\|_{\cY^\ast,1} ,
\]
where
\begin{align} \label{eqnV}
 \bfH= P^{-1}\left[-x^\top \otimes \left(A^\top+\frac{2xr^\top}{1+x^\top x} \right )+I_n\otimes r^\top \right],\quad
 \bfJ=P^{-1}\left[A^\top+\frac{2xr^\top}{1+x^\top x}\right].
\end{align}
\end{theorem}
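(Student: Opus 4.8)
The plan is to start from the adjoint-operator characterization of the condition number in Lemma~\ref{lemmaK}, namely $\kappa = \max_{\|u\|_{\cY^\ast}=1}\|J^\ast\cdot u\|_{c^\ast}$, and to unpack both factors explicitly using the expression of $J^\ast$ from Lemma~\ref{lemmaDualJ} and the formula \eqref{eqnDualC} for the dual componentwise norm. First I would take $u\in\R^k$ and write out $J^\ast(u) = (J_1^\ast(u),\,J_2^\ast(u))$ with
$J_1^\ast(u) = r u^\top\call P^{-1} - \left[A^\top+\frac{2xr^\top}{1+x^\top x}\right]^\top P^{-1}\call^\top u\, x^\top$ and
$J_2^\ast(u) = \left[A^\top+\frac{2xr^\top}{1+x^\top x}\right]^\top P^{-1}\call^\top u$.
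The data space here is $\R^{m\times n}\times\R^{m\times 1}$, so $\|J^\ast(u)\|_{c^\ast}$ is, by \eqref{eqnDualC} together with Lemma~\ref{lemmaProductNorm} (applied with absolute norm $\|\cdot\|_1$, so that the product dual norm is again an $\ell_1$-sum), the $\ell_1$-norm of the vector whose entries are $|(J_1^\ast(u))_{ij}|\,|a_{ij}|$ and $|(J_2^\ast(u))_i|\,|b_i|$.

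The key step is then a \textbf{vectorization/rewriting} move: I want to show that the entrywise-weighted $\ell_1$-norm of $(J_1^\ast(u),J_2^\ast(u))$ equals $\big\|[\bfH D_A\ \ \bfJ D_b]^\top\call^\top u\big\|_1$ with $\bfH,\bfJ$ as in \eqref{eqnV} and $D_A=\diag(\vect(A))$, $D_b=\diag(b)$. Concretely, $J_2^\ast(u) = \bfJ^\top\call^\top u$ is immediate from the definition of $\bfJ$, so the weighted contribution from the $b$-part is $\||D_b|\,\bfJ^\top\call^\top u\|_1 = \|D_b\bfJ^\top\call^\top u\|_1$ (as $D_b$ is diagonal). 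For the $A$-part I would use the Kronecker identities $\vect(MXN)=(N^\top\otimes M)\vect(X)$ and $\vect(r u^\top\call P^{-1}) = (P^{-\top}\call^\top u)\otimes r$, and similarly $\vect\big(\big[\ldots\big]^\top P^{-1}\call^\top u\, x^\top\big) = x\otimes\big(\big[A^\top+\tfrac{2xr^\top}{1+x^\top x}\big]^\top P^{-1}\call^\top u\big)$, to express $\vect(J_1^\ast(u))$ as a single matrix times $\call^\top u$; factoring out gives exactly $\bfH^\top\call^\top u$ after matching with \eqref{eqnV} (using $P^{-\top}=P^{-1}$ since $P$ is symmetric). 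Weighting entrywise by $|\vect(A)|$ and taking $\ell_1$ then yields $\|D_A\bfH^\top\call^\top u\|_1$. Stacking the two contributions, $\|J^\ast(u)\|_{c^\ast} = \|[\bfH D_A\ \ \bfJ D_b]^\top\call^\top u\|_1$; since the target norm $\|\cdot\|_{\cY^\ast}$ was left generic, replacing $\|\cdot\|_1$ on $\R^k$ by the appropriate $\|\cdot\|_{\cY^\ast,1}$ operator-type notation gives the stated formula.

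The main obstacle I anticipate is purely bookkeeping: keeping the order of the Kronecker factors and the transposes consistent between $\vect(J_1^\ast(u))$ and the defining expression \eqref{eqnV} of $\bfH$ — in particular verifying that $-x^\top\otimes(A^\top+\tfrac{2xr^\top}{1+x^\top x})$ paired with $\call^\top u$ reproduces the $-x\otimes(\ldots P^{-1}\call^\top u)$ term after the $P^{-1}$ is pulled through, and that the sign and the $I_n\otimes r^\top$ block line up with $\vect(ru^\top\call P^{-1})$. Once the vectorization identities are applied carefully this is routine, and the only genuine subtlety is the harmless one already flagged in the text after Lemma~\ref{lemmaK}: perturbations $\sd\bu\notin\cX_{\bu}$ contribute $\|\sd\bu\|_c=\infty$ and are automatically excluded, so the diagonal weightings $D_A,D_b$ (which annihilate directions where a data entry vanishes) correctly encode the componentwise constraint without any loss in the maximization.
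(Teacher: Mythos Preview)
Your proposal is correct and follows essentially the same route as the paper: invoke Lemma~\ref{lemmaK}, plug in the adjoint $J^\ast$ from Lemma~\ref{lemmaDualJ}, evaluate $\|J^\ast(u)\|_{c^\ast}$ via the dual componentwise norm \eqref{eqnDualC}, and recognise the result as $\big\|[\bfH D_A\ \ \bfJ D_b]^\top\call^\top u\big\|_1$. The only cosmetic difference is in the bookkeeping step: the paper verifies the key identity $\vect(J_1^\ast(u))=\bfH^\top\call^\top u$ entrywise, by checking that $r_iP^{-1}e_j-x_jP^{-1}\big(A^\top+\tfrac{2xr^\top}{1+x^\top x}\big)e_i$ is the $(m(j-1)+i)$th column of $\bfH$, whereas you obtain it in one stroke from the rank-one vec identities $\vect(rv^\top)=v\otimes r$ and $\vect(wx^\top)=x\otimes w$ together with the mixed-product rule; both arguments are standard and equivalent.
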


\begin{proof}
Let $\sd a_{ij}$, $\sd b_{ij}$,  be the entries of $\sd A, \,\sd b $ and  $\sd d$ respectively, using \eqref{eqnDualC}, we have
 \[
 \|(\sd A,\sd b)\|_{c^\ast}=\sum_{i,j}|\sd a_{ij}||a_{ij}|+\sum_{i,j}|\sd b{ij}||b_{ij}|.
 \]
Applying Lemma~\ref{lemmaDualJ}, we derive that
\begin{align*}
\|J^{\ast}(u)\|_{c\ast}=&\sum_{j=1}^{n}\sum_{i=1}^{m}  |a_{ij}|\left|\left(r u^\top\call P^\rt - \left( A^\top+ \frac{2xr^\top}{1+x^\top x} \right)^\top P^{-1} \call^\top u x^\top\right)_{ij}\right|                 \\+&\sum_{i=1}^{m}
|b_{i}|\left|\left(\left( A^\top+\frac{2xr^\top}{1+x^\top x}\right) P^{-1}\call^\top u\right)_{i}\right|\\
=&\sum_{j=1}^{n}\sum_{i=1}^{m}  |a_{ij}| \left|\left[r_i P^{-1} e_j - x_j P^{-1} \left( A^\top+\frac{2xr^\top}{1+x^\top x}\right) e_i\right]^\top \call^\top u \right|\\+&\sum_{i=1}^{m}
|b_{i}|\left|\left(P^{-1} (A^\top+\frac{2x r^\top}{1+x^\top x}) e_i\right) \call^\top u\right|
\end{align*}
where $r_{i}$ is the $i$th component of $r$. Then it can be verified that
$$
r_i P^{-1} e_j - x_j P^{-1} \left( A^\top+\frac{2xr^\top}{1+x^\top x}\right) e_i
$$
is the $(m(j-1)+i)$th column of the $n \times (mn)$ matrix
$\bfH$
 implying that the above expression
equals to
\[
\left\| \left[ \begin{array}{c}
D_A \bfH^\top \call^\top  {u} \\
D_{{b}} \bfJ^\top \call^\top  {u}\\
\end{array} \right] \right\|_1 =
\left\| [ \bfH D_A \ \ \bfJ D_{{b}}]^{\top}
\call^\top {u} \right\|_1 .
\]
The theorem then follows from Lemma~\ref{lemmaK}. \hfill $\Box$
\end{proof}

The following case study discusses some commonly used norms
for the norm in the solution space to obtain some specific
expressions of the condition number $\kappa$. The proof is trivial thus is omitted.

\begin{corollary} \label{colKinfty1}
Using the above notations,
when the infinity norm is chosen as the norm in the solution
space $\cY$, we get
\begin{equation} \label{eqnKinfty1}
\kappa_{\infty} =\left\| \left|\call \bfH\right| \vect (|A|)
+\left|\call \bfJ\right| |b|\right\|_\infty.
\end{equation}
When the infinity norm is chosen as the norm in the solution
space $\R^n$, the corresponding relative mixed condition number is given by
\begin{equation}
\kappa_{\infty}^{\rel} =\frac{\left\| \left|\call \bfH\right| \vect (|A|)
+\left|\call \bfJ\right| |b|\right\|_\infty} {\|\call x\|_\infty}.
\end{equation}
\end{corollary}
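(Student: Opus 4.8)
The plan is to specialize Theorem~\ref{thmK} to the case $\cY = \R^k$ with the infinity norm, so that $\|\cdot\|_{\cY^\ast} = \|\cdot\|_1$, and then unwind the resulting maximization over the $\ell_1$-ball. First I would write out $\kappa_\infty = \max_{\|u\|_1 = 1}\big\|[\bfH D_A \ \ \bfJ D_b]^\top \call^\top u\big\|_1$. The key observation is the standard duality fact that for any real matrix $W$ one has $\max_{\|u\|_1 = 1}\|W^\top u\|_1 = \|W\|_{1,\infty}^{\ast}$-type identity; concretely, $\max_{\|u\|_1=1}\|W u\|_\infty$ is the maximum absolute entry of $W$, and dually $\max_{\|u\|_1=1}\|W^\top u\|_1 = \max_j \|W_{:,j}\|_1$ is the maximum absolute column sum — but here I actually want the version where the outer norm is $\|\cdot\|_1$ and the maximization is over $\|u\|_1=1$, which by convexity/extreme-point reasoning is attained at $u = \pm e_i$, giving $\max_i \|(\text{$i$-th row of } [\bfH D_A\ \ \bfJ D_b]^\top\call^\top)\|_1$; rewriting, this is $\max_i \sum_j |(\call[\bfH D_A\ \ \bfJ D_b])_{ij}|$, i.e.\ the $\infty$-norm of the vector of absolute row sums of $\call[\bfH D_A \ \ \bfJ D_b]$.

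Next I would convert the absolute-row-sum expression into the compact form stated in the corollary. Since $D_A$ is the diagonal matrix whose diagonal is $\vect(|A|)$ (entries $|a_{ij}|$ arranged to match the column ordering of $\bfH$) and $D_b$ is diagonal with diagonal $|b|$, for a fixed row index $i$ we have $\sum_j |(\call\bfH D_A)_{ij}| + \sum_j |(\call\bfJ D_b)_{ij}| = \sum_j |(\call\bfH)_{ij}|\,|a|_j + \sum_j |(\call\bfJ)_{ij}|\,|b|_j$, which is exactly the $i$-th entry of $|\call\bfH|\,\vect(|A|) + |\call\bfJ|\,|b|$. Taking the max over $i$ yields $\kappa_\infty = \big\||\call\bfH|\,\vect(|A|) + |\call\bfJ|\,|b|\big\|_\infty$, which is \eqref{eqnKinfty1}. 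The relative version then follows immediately from the definition $\kappa^{\rel}_\infty = \kappa_\infty \|\bu\|_\cX / \|\Psi(\bu)\|_\cY$ with the solution-space norm being $\|\call x\|_\infty$; note that in this componentwise setting the data-space factor is already absorbed into $D_A$ and $D_b$ (the perturbations are measured relative to $|A|,|b|$), so only the division by $\|\call x\|_\infty$ survives.

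The only mildly delicate point — and the reason the authors call the proof trivial — is justifying that the maximum of $\|W^\top u\|_1$ over the unit $\ell_1$-ball $\{\|u\|_1 = 1\}$ is attained at a signed unit coordinate vector. This is immediate from convexity: $u \mapsto \|W^\top u\|_1$ is a convex function, the feasible set is the convex hull of $\{\pm e_1,\dots,\pm e_k\}$, so the maximum over the polytope is attained at a vertex, and $\|W^\top(\pm e_i)\|_1 = \|W^\top e_i\|_1$ equals the $\ell_1$-norm of the $i$-th row of $W$. I expect no real obstacle here; the whole corollary is a mechanical specialization of Theorem~\ref{thmK} plus this one-line duality remark, which is exactly why the paper omits the details.
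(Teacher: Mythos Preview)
Your proposal is correct and is precisely the natural specialization the authors have in mind; the paper itself omits the proof entirely, stating only ``The proof is trivial thus is omitted.'' Your extreme-point argument for $\max_{\|u\|_1=1}\|W^\top u\|_1$ and the subsequent identification of the row sums of $\call[\bfH D_A\ \ \bfJ D_b]$ with the entries of $|\call\bfH|\,\vect(|A|)+|\call\bfJ|\,|b|$ is exactly the expected route, and your remark that $\|(A,b)\|_c=1$ absorbs the data-norm factor in $\kappa_\infty^{\rel}$ is the right justification for the relative formula.
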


In the following, we consider the 2-norm on the solution space and derive an upper bound for the corresponding condition number respect to the 2-norm on the solution space.

\begin{corollary} \label{colK2}
When the 2-norm is used in the solution space, we have
\begin{equation} \label{eqnK2}
\kappa_2 \le
\sqrt{k} \, \kappa_{\infty} .
\end{equation}
\end{corollary}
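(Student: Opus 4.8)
The plan is to reduce the $2$-norm condition number to the infinity-norm condition number via the standard finite-dimensional norm equivalence $\|z\|_2 \le \sqrt{k}\,\|z\|_\infty$ for $z \in \R^k$. Recall from Theorem \ref{thmK} that
\[
\kappa = \max_{\|u\|_{\cY^\ast}=1} \left\| [\bfH D_A \ \ \bfJ D_b]^\top \call^\top u \right\|_1,
\]
so $\kappa_2$ is obtained by taking $\cY = \R^k$ with the $2$-norm, hence $\cY^\ast = \R^k$ with the $2$-norm as well (since $\|\cdot\|_{2^\ast} = \|\cdot\|_2$), while $\kappa_\infty$ uses the infinity norm on $\cY$, so $\cY^\ast$ carries the $1$-norm.

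First I would write both condition numbers as maxima of the same linear map $u \mapsto \|M^\top \call^\top u\|_1$, where $M := [\bfH D_A \ \ \bfJ D_b]$, over different unit balls: the Euclidean unit ball $\{\|u\|_2 = 1\}$ for $\kappa_2$ and the $1$-norm unit ball $\{\|u\|_1 = 1\}$ for $\kappa_\infty$. Since $\|u\|_2 \le 1$ implies $\|u\|_1 \le \sqrt{k}\,\|u\|_2 \le \sqrt{k}$, every $u$ feasible for the $\kappa_2$ problem satisfies $u/\sqrt{k}$ being feasible (after rescaling to the boundary, or using that the max over the ball equals the max over the sphere for this positively homogeneous objective) for a $\sqrt{k}$-scaled version of the $\kappa_\infty$ problem. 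Concretely, $\|M^\top\call^\top u\|_1 = \sqrt{k}\,\|M^\top\call^\top (u/\sqrt{k})\|_1 \le \sqrt{k}\,\kappa_\infty$, using that $\|u/\sqrt{k}\|_1 \le 1$ and that $\kappa_\infty$ is the maximum of $\|M^\top\call^\top v\|_1$ over $\|v\|_1 \le 1$. Taking the supremum over $\|u\|_2 = 1$ gives $\kappa_2 \le \sqrt{k}\,\kappa_\infty$.

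The argument is essentially a one-line norm-equivalence estimate, so there is no real obstacle; the only point requiring a little care is bookkeeping the dual norms correctly — making sure that the constraint set in $\kappa_2$ lives in the $2$-norm (self-dual) and the constraint set in $\kappa_\infty$ lives in the $1$-norm — and noting that the objective $u \mapsto \|M^\top\call^\top u\|_1$ is positively homogeneous of degree one so that maximizing over the sphere or the ball is immaterial. Since the paper itself flags the proof as trivial and omits it, I would keep this to two or three sentences at most.
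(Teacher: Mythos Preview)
Your proof is correct and follows essentially the same route as the paper's: both invoke the dual formulation from Theorem~\ref{thmK}, identify $\kappa_2$ and $\kappa_\infty$ as the $\|\cdot\|_{2,1}$ and $\|\cdot\|_{1,1}$ operator norms of $[\bfH D_A\ \ \bfJ D_b]^\top \call^\top$, and then use the inequality $\|u\|_1\le\sqrt{k}\,\|u\|_2$ on $\R^k$. The paper phrases the last step via submultiplicativity ($\|B\hat u\|_1\le\|B\|_1\|\hat u\|_1\le\sqrt{k}\,\|B\|_1$) while you rescale $u$ directly into the $1$-norm unit ball, but this is purely cosmetic. One small remark: contrary to your closing sentence, the paper does not omit this proof --- it is Corollary~\ref{colKinfty1} whose proof is omitted as trivial.
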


\begin{proof}
When $\| \cdot \|_\cY= \| \cdot \|_2$, then
$\| \cdot \|_{\cY^\ast} = \| \cdot \|_2$. From Theorem~\ref{thmK},
\[
\kappa_2 = \left\| [ \bfH D_A \ \ \bfJ D_{{b}}]^{\top}
\call^\top\right\|_{2,1} .
\]
It follows from \cite{Higham2002Book} that for any matrix $B$,
$\| B \|_{2,1} = \max_{\| {u} \|_2 = 1} \| B {u} \|_1
= \| B \hat{{u}} \|_1$, where $\hat{{u}} \in \mathbb{R}^k$
is a unit 2-norm vector. Applying $\| \hat{{u}} \|_1 \le
\sqrt{k}\, \| \hat{{u}} \|_2$, we get
\[
\| B \|_{2,1} = \| B \hat{{u}} \|_1 \le
\| B \|_1 \| \hat{{u}} \|_1 \le
\sqrt{k}\, \| B \|_1 .
\]
Substituting the above $B$ with $[ \bfH D_A \ \ \bfJ D_{{b}}]^{\top}
\call^\top$, we have
\[
\kappa_2 \le \sqrt{k} \,
\left\| [ \bfH D_A \ \ \bfJ D_{{b}}]^{\top}
\call^\top\right\|_1 ,
\]
which implies (\ref{eqnK2}). \hfill $\Box$
\end{proof}

By now, we have considered the various mixed condition
numbers, that is, componentwise norm in the data space and
the infinity norm or 2-norm in the solution space. In the
rest of the subsection, we study the case of componentwise
condition number,
that is, componentwise norm in the solution space as well.

\begin{corollary} \label{colKc}
Considering the componentwise norm defined by
\begin{equation}\label{eq:comp norm}
\| {u} \|_c =
\min \{ \omega , \ | u_i | \le \omega \,
|(\call {x})_i|, \ i=1,...,k \} =
\max \{ |u_i| / |(\call {x})_i|, \ i=1,...,k \} ,
\end{equation}
in the solution space, we have the following  expression
for the componentwise condition number
\begin{eqnarray*}
\kappa_c
&=& \left\| |D_{\call {x}}^\dagger ( \left|\call \bfH\right| \vect (|A|)
+\left|\call \bfJ\right| |b|)\right  \|_{\infty}.
\end{eqnarray*}
\end{corollary}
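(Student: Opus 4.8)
The plan is to derive the componentwise condition number $\kappa_c$ directly from Lemma~\ref{lemmaK}, specializing the abstract formula $\kappa(\bu) = \max_{\|\bz\|_{\cY^\ast}=1} \|(\sd\Psi(\bu))^\ast \cdot \bz\|_{c^\ast}$ to the case where the norm on the solution space $\cY = \R^k$ is the componentwise norm $\|\cdot\|_c$ defined in \eqref{eq:comp norm}. The first step is to identify the dual norm $\|\cdot\|_{\cY^\ast}$ of this componentwise norm. By exactly the computation carried out for $\|\cdot\|_{c^\ast}$ in \eqref{eqnDualC} (with $\bu$ replaced by $\call x$), we get $\|\bz\|_{\cY^\ast} = \sum_{i=1}^k |z_i|\,|(\call x)_i| = \| D_{\call x} \bz\|_1$, so the constraint $\|\bz\|_{\cY^\ast} = 1$ becomes $\|D_{\call x}\bz\|_1 = 1$.

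Next I would substitute the expression for $\|J^\ast(u)\|_{c^\ast}$ already obtained inside the proof of Theorem~\ref{thmK}, namely $\|J^\ast(u)\|_{c^\ast} = \big\| [\bfH D_A \ \ \bfJ D_b]^\top \call^\top u \big\|_1$. The task is then to evaluate $\max \big\{ \| [\bfH D_A \ \ \bfJ D_b]^\top \call^\top u \|_1 : \|D_{\call x} u\|_1 = 1 \big\}$. The standard trick is the change of variables $w = D_{\call x} u$; on the components where $(\call x)_i \neq 0$ this gives $u_i = w_i / (\call x)_i$, i.e. $u = D_{\call x}^\dagger w$ (and the components with $(\call x)_i = 0$ contribute nothing, by the convention attached to \eqref{eq:comp norm}, so we may take them zero). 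The maximization becomes $\max_{\|w\|_1 = 1} \| [\bfH D_A \ \ \bfJ D_b]^\top \call^\top D_{\call x}^\dagger w \|_1$, which is the $(1,1)$ operator norm of the matrix $[\bfH D_A \ \ \bfJ D_b]^\top \call^\top D_{\call x}^\dagger$; since the $\|\cdot\|_{1,1}$ operator norm of a matrix is the maximum $\ell_1$-norm of its columns, and more to the point $\max_{\|w\|_1=1}\|Mw\|_1 = \||M|\mathbf 1\|_\infty$-type manipulations reduce it to $\| |D_{\call x}^\dagger| ( |\call\bfH|\,\vect(|A|) + |\call\bfJ|\,|b| ) \|_\infty$ after taking absolute values entrywise, exactly as in the passage from the summation form to the infinity-norm form in Corollary~\ref{colKinfty1}. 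This is the routine bookkeeping the statement says is omitted; I would present it compactly by noting that $\|M^\top\|_{1,1} = \| |M|\mathbf{1} \|_\infty$ where $\mathbf 1$ is the all-ones vector, then applying this with $M = [\bfH D_A \ \ \bfJ D_b]\, (\text{rows selected and scaled by } \call^\top D_{\call x}^\dagger)$ and recognizing $|D_A|\mathbf 1 = \vect(|A|)$, $|D_b|\mathbf 1 = |b|$.

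The only genuine subtlety — and the step I expect to need the most care — is handling the Moore--Penrose pseudoinverse $D_{\call x}^\dagger$ correctly when some component $(\call x)_i$ vanishes: one must argue that such indices are legitimately dropped, which is precisely what the convention "if $x_i = 0$ then $y_i$ should be zero" (stated after \eqref{eq:mixed}) and the definition \eqref{eq:comp norm} of $\|\cdot\|_c$ via $\max\{|u_i|/|(\call x)_i|\}$ over nonzero $(\call x)_i$ are designed to ensure. Everything else is the same dualization machinery used in Theorem~\ref{thmK} and Corollary~\ref{colKinfty1}, so the proof is indeed short, which is why the paper calls it trivial and omits it; in a written-out version I would simply chain Lemma~\ref{lemmaK}, the dual-norm identity \eqref{eqnDualC} applied to $\call x$, the column-norm characterization of $\|\cdot\|_{1,1}$, and the entrywise-absolute-value reductions already displayed in the proof of Theorem~\ref{thmK}.
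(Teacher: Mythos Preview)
Your proposal is correct and follows the same route the paper intends: the paper's entire proof is the single sentence ``The expressions immediately follow from Theorem~\ref{thmK} and Corollary~\ref{colKinfty1},'' and you have simply unpacked what that means---computing the dual of the componentwise norm on $\cY$ via \eqref{eqnDualC}, substituting into the formula of Theorem~\ref{thmK}, and reducing via the change of variables $w=D_{\call x}u$ to the $\infty$-norm expression exactly parallel to Corollary~\ref{colKinfty1}. Your flagging of the $D_{\call x}^\dagger$ subtlety when some $(\call x)_i=0$ is appropriate and matches the paper's standing convention.
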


\begin{proof}
The expressions immediately follow from Theorem~\ref{thmK}
and Corollary \ref{colKinfty1}. \hfill $\Box$
\end{proof}

In the following, we will establish the equivalence relationship between $\kappa_{\infty}^{\rel}$,  $\kappa_c$ and  $m(A,b)$, $c(A,b)$ respectively, when $L=I_n$ in \eqref{eq:g dfn}.
\begin{theorem}
	When $L=I_n$, the expressions of $\kappa_{\infty}^{\rel}$ and $\kappa_c$ are equivalent to those of $m(A,b)$ and $c(A,b)$ given by \eqref{eq:mixed}, respectively.
\end{theorem}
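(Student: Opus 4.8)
The plan is to show that when $L=I_n$, the matrices $\bfH$ and $\bfJ$ appearing in Theorem~\ref{thmK} satisfy $|\bfH|\,\vect(|A|)+|\bfJ|\,|b| = |M+N|\begin{bmatrix}|\vect(A)|\\ |b|\end{bmatrix}$, where $M,N$ are as in \eqref{eq:mixed}. Once this entrywise identity is established, the claim for $\kappa_\infty^{\rel}$ follows by dividing by $\|x\|_\infty=\|L x\|_\infty$ (Corollary~\ref{colKinfty1}), and the claim for $\kappa_c$ follows by noting that $D_{Lx}^\dagger=D_x^\dagger$ when $L=I_n$ (Corollary~\ref{colKc}), matching the definition of $c(A,b)$.

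The key step is to reconcile the two representations of the Fr\'echet derivative. In Theorem~\ref{thmK} the derivative is encoded (via Lemma~\ref{lemmaDualJ} and the column-indexing argument in its proof) by the pair $(\bfH,\bfJ)$ with
\[
\bfH = P^{-1}\!\left[-x^\top\otimes\Big(A^\top+\tfrac{2xr^\top}{1+x^\top x}\Big)+I_n\otimes r^\top\right],\qquad
\bfJ = P^{-1}\!\left[A^\top+\tfrac{2xr^\top}{1+x^\top x}\right],
\]
while in \eqref{eq:mixed} the same derivative is encoded by $M+N$ with $M$ and $N$ as displayed there. First I would expand the $xr^\top/(1+x^\top x)$ term using $1+x^\top x = 1/v_{n+1,n+1}^2$ and the identity $r=\sigma_{n+1}u_{n+1}/v_{n+1,n+1}$ from \eqref{eq:r}, together with $[x^\top,-1]^\top = -v_{n+1}/v_{n+1,n+1}$ from \eqref{ex:x}; this rewrites $\frac{2xr^\top}{1+x^\top x}$ in a form involving $2\sigma_{n+1}x(v_{n+1}^\top\otimes u_{n+1}^\top)$-type blocks after applying Kronecker-product mixed-product rules, which is exactly where the $N$ term originates. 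Then, using standard identities $\vect(AXB)=(B^\top\otimes A)\vect(X)$ and $X^\top\otimes(PA^\top) = P\,(X^\top\otimes A^\top)$-style manipulations, I would verify column by column that $\bfH$ reproduces the first block of $M$, namely $P^{-1}\otimes b^\top - x^\top\otimes(P^{-1}A^\top) - P^{-1}\otimes(Ax)^\top$ (after accounting for the $r=b-Ax$ splitting and the $vec$ ordering), and that $\bfJ$ reproduces $P^{-1}A^\top$, while the leftover terms collect into $N=2\sigma_{n+1}P^{-1}x(v_{n+1}^\top\otimes u_{n+1}^\top)$.

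The main obstacle I anticipate is bookkeeping: the two papers use different but equivalent ways of packaging the derivative (Theorem~\ref{thmK} keeps $\bfH$ as an $n\times mn$ matrix indexed so that its $(m(j-1)+i)$th column is $r_iP^{-1}e_j - x_jP^{-1}[A^\top+\tfrac{2xr^\top}{1+x^\top x}]e_i$, whereas $M,N$ are built from explicit Kronecker products), so I must fix one $\vect$-ordering convention and carefully track which Kronecker factor sits on which side. A secondary subtlety is that the equality is claimed at the level of the \emph{condition number expressions}, not of the matrices $M+N$ and $[\bfH\ \bfJ]$ themselves: since only $|M+N|$ and $|\bfH|,|\bfJ|$ enter, it suffices to show the derivatives agree as \emph{linear maps} (equivalently, $M+N$ and the matrix with blocks $\bfH,\bfJ$ represent the same operator up to the harmless reordering of $\vect(\sd A)$), because then the absolute-value-times-$|\vect(A)|,|b|$ quantities coincide. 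I would state this reduction explicitly at the start, then carry out the Kronecker-algebra verification, and finally invoke Corollaries~\ref{colKinfty1} and~\ref{colKc} to conclude. \hfill $\Box$
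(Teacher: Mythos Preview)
Your proposal is correct and follows essentially the same route as the paper: use the identities \eqref{ex:x}, \eqref{eq:r} and Kronecker-product algebra to rewrite $M$ and $N$ so that $M+N=[\bfH\ \ \bfJ]$, then read off the equivalence from Corollaries~\ref{colKinfty1} and~\ref{colKc}. The paper in fact obtains the \emph{exact matrix equality} $M+N=[\bfH\ \ \bfJ]$ (by separately simplifying $M$ and $N$ and adding), so the ``secondary subtlety'' you flag about equality only at the level of linear maps up to reordering does not arise; you can drop that hedge and simply verify the matrix identity directly.
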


\begin{proof}
For $N$ defined in \eqref{eq:mixed}, from \eqref{eq:r}, \eqref{ex:x} and Kronecker product property, it is not difficult to see that
\begin{align*}
	N & =  2\sigma_{n+1}P^{-1}x(v_{n+1}^\top \otimes u_{n+1}^\top)=-2P^{-1}x\,v_{n+1,n+1}^2 \left[x^\top \ \ \   -1\right] \otimes r^\top \\
	&=\frac{1}{1+x^\top x}P^{-1}\left[-2 x( x^\top \otimes r^\top) \ \ \  2xr^\top \right]=\frac{1}{1+x^\top x}P^{-1}\left[x^\top \otimes(-2x r^\top) \ \ \  2xr^\top \right].
\end{align*}
For $M$ given in  \eqref{eq:mixed}, it also can be derived that
\begin{align*}
	M & =  \left[P^{-1}\otimes b^\top-x^\top \otimes(P^{-1}A^\top )-P^{-1}\otimes(Ax)^\top \ \ \  P^{-1}A^\top \right]\\
	&=\left[P^{-1}(I_n \otimes r^\top)- P^{-1} (x^\top \otimes A^\top ) \ \ \ P^{-1}A^\top \right]=P^{-1}\left[(I_n \otimes r^\top)- (x^\top \otimes A^\top )  \ \ \  A^\top \right].
\end{align*}
Combing these two facts and the explicit expressions $\kappa_{\infty}^{\rel}$ and $\kappa_c$  of when $L=I_n$, we can complete the proof of this theorem.
\qed
\end{proof}

The mixed and componentwise condition numbers of a linear function of the TLS solution $x$ can recover the previous results  $m(A,b)$ and $c(A,b)$ given by  \cite{Zhou} when we take $L=I_n$ in the expressions of $\kappa_{\infty}^{\rel}$ and $\kappa_c$. Also, we adopt the dual techniques to derive the condition numbers expressions, which enable us to reduce the computational complexity because  the column number of the matrix expression of $J$ is usually smaller than its row number.

By taking account of the compact form of the inverse of $P$ given in Lemma \ref{lemma:P}, we can give a SVD-based formula of $\kappa_{\infty}^{\rel}$ and $\kappa_c$ in the following corollary.

\begin{corollary}\label{co:un svd}
	With the notations above, we have
	\begin{align*}
		\kappa_{\infty}^{\rel} &=\frac{\left\| \left|\call  Q_1 Q Q_1\left[-x^\top \otimes \left(A^\top+\frac{2xr^\top}{1+x^\top x} \right )+I_n\otimes r^\top \right]\right| \vect (|A|)
+\left|\call Q_1 Q Q_1\left(A^\top+\frac{2xr^\top}{1+x^\top x}\right)\right| |b|\right\|_\infty} {\|\call x\|_\infty},\\
\kappa_c &=\left\| D_{\call x}^\dagger \left|\call  Q_1 Q Q_1\left[-x^\top \otimes \left(A^\top+\frac{2xr^\top}{1+x^\top x} \right )+I_n\otimes r^\top \right]\right| \vect (|A|) \right. \\
& \left. \quad\quad \quad+D_{\call x}^\dagger  \left|\call Q_1 Q Q_1\left(A^\top+\frac{2xr^\top}{1+x^\top x}\right)\right| |b|\right\|_\infty,
	\end{align*}
	where $Q$ and $Q_1$ are defined in Lemma \ref{lemma:P}.
\end{corollary}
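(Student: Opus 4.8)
The plan is to read off the two SVD-based formulae by substituting the compact factorization of $P^{-1}$ provided by Lemma~\ref{lemma:P} into the expressions for $\kappa_{\infty}^{\rel}$ and $\kappa_c$ that were already established in Corollary~\ref{colKinfty1} and Corollary~\ref{colKc}. Those two corollaries express both condition numbers through the matrices $\bfH$ and $\bfJ$ of \eqref{eqnV}, and the key structural observation is that each of $\bfH$ and $\bfJ$ carries $P^{-1}$ as a left factor, namely $\bfH = P^{-1}[-x^\top\otimes(A^\top+\frac{2xr^\top}{1+x^\top x})+I_n\otimes r^\top]$ and $\bfJ = P^{-1}(A^\top+\frac{2xr^\top}{1+x^\top x})$.

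First I would invoke the genericity condition \eqref{eq:genericity}, under which $P=A^\top A-\sigma_{n+1}^2 I_n$ is nonsingular, so that Lemma~\ref{lemma:P} applies and gives $P^{-1}=Q_1 Q Q_1$ with $Q=V_{11}D^{-1}V_{11}^\top$, $Q_1=I_n+xx^\top$ and $D=\diag(\sigma_1^2-\sigma_{n+1}^2,\ldots,\sigma_n^2-\sigma_{n+1}^2)$. Then I would simply replace $P^{-1}$ by $Q_1 Q Q_1$ inside $\bfH$ and $\bfJ$, so that $\call\bfH=\call Q_1 Q Q_1[-x^\top\otimes(A^\top+\frac{2xr^\top}{1+x^\top x})+I_n\otimes r^\top]$ and $\call\bfJ=\call Q_1 Q Q_1(A^\top+\frac{2xr^\top}{1+x^\top x})$, and plug these into the formula for $\kappa_{\infty}^{\rel}$ of Corollary~\ref{colKinfty1} and into the formula for $\kappa_c$ of Corollary~\ref{colKc}. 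This produces verbatim the two displayed expressions, completing the argument.

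I do not expect any genuine obstacle: the corollary merely rewrites earlier formulae in a form that recycles the SVD of $[A,~b]$ which a direct TLS solver (e.g.\ \cite[Algorithm 3.1]{VanHuffelVandewalle1991Book}) has already computed, together with $x$ obtained from \eqref{ex:x} and $r$ from \eqref{eq:r}. The only point worth flagging, rather than a real difficulty, is that $V_{11}$ (the leading $n\times n$ block of $V$), the squared singular values $\sigma_i^2$ and $\sigma_{n+1}^2$, and the TLS solution $x$ all come for free from that SVD, so $Q$ and $Q_1$---and hence the condition numbers---are inexpensive to evaluate once the TLS problem itself has been solved, with no Kronecker-product-sized matrix formed beyond what already appears in $\bfH$.
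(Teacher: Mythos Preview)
Your proposal is correct and matches the paper's approach exactly: the paper gives no explicit proof but simply remarks that the corollary follows by inserting the factorization $P^{-1}=Q_1QQ_1$ from Lemma~\ref{lemma:P} into the expressions of Corollaries~\ref{colKinfty1} and~\ref{colKc}, which is precisely what you do.
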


Although we have obtained the SVD-based expressions of  $\kappa_{\infty}^{\rel}$ and $\kappa_c$ in Corollary \ref{co:un svd}, they involve the computations of Kronecker product, which may needs extra memory to form them explicitly. In the following, we will give upper bounds for $\kappa_\infty^{\rel}$ and $\kappa_c$ without Kronecker product. The proof of this corollary is based on Kronecker product property and the triangle inequality, and is omitted.

\begin{corollary}\label{co:lsle}
With the notations above, denoting
\begin{align*}
\kappa_\infty^\u&=\frac{\left\| |\call Q_1 Q Q_1|{|A^\top+ \frac{2xr^\top}{1+x^\top x}|   |A||x|}\right\|_\infty+\left\||\call Q_1 Q Q_1|   {|A^\top||r|}\right\|_\infty+\left\|\left|\call Q_1 Q Q_1\left(A^\top+\frac{2xr^\top}{1+x^\top x}\right)\right| |b| \right\|_\infty} {\|\call x\|_\infty},\cr
\kappa_c^\u&=\left\|D_{\call x}^{\dagger }|\call Q_1 Q Q_1 | {|A^\top+ \frac{2xr^\top}{1+x^\top x}| |A||x|}\right\|_\infty+\left\|D_{\call x}^{\dagger } |\call Q_1 Q Q_1|   {|A^\top||r|}\right\|_\infty\\
&\quad+ \left\|D_{\call x}^{\dagger }\left|\call Q_1 Q Q_1\left(A^\top+\frac{2xr^\top}{1+x^\top x}\right)\right| |b|\right\|_\infty,
\end{align*}
we have
 \begin{align*}
   \kappa_\infty^{\rel}\leq \kappa_\infty^\u,\quad  \kappa_c\leq \kappa_c^\u.
  \end{align*}
\end{corollary}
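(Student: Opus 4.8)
The plan is to derive the two upper bounds directly from the exact expressions of $\kappa_{\infty}^{\rel}$ and $\kappa_c$ given in Corollary \ref{co:un svd} by splitting the Kronecker-product term and then applying elementary inequalities. Recall that in Corollary \ref{co:un svd} the numerator of $\kappa_{\infty}^{\rel}$ contains the piece $\left|\call Q_1 Q Q_1\left[-x^\top \otimes \left(A^\top+\frac{2xr^\top}{1+x^\top x}\right) + I_n\otimes r^\top\right]\right|\vect(|A|)$. The first thing I would do is use the standard identity $(x^\top \otimes C)\vect(Y) = \vect(C Y x)$ together with $(I_n \otimes r^\top)\vect(Y) = Y^\top r$ (the ``$\vec$'' trick, $\vect(CYB)=(B^\top\otimes C)\vect(Y)$), so that the term $\left[-x^\top \otimes \left(A^\top+\frac{2xr^\top}{1+x^\top x}\right) + I_n\otimes r^\top\right]\vect(A)$ collapses to the plain vector $-\left(A^\top+\frac{2xr^\top}{1+x^\top x}\right)Ax + A^\top r$. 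This removes the Kronecker product entirely at the level of the exact quantity (before absolute values).

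The second step is to put the absolute values back in and bound $\left|\call Q_1 Q Q_1\big[\,-\left(A^\top+\frac{2xr^\top}{1+x^\top x}\right)Ax + A^\top r\,\big]\right|$ from above. Since $|Cv|\le |C||v|$ entrywise for any matrix $C$ and vector $v$, and $|u+w|\le |u|+|w|$, we get
\[
\left|\call Q_1 Q Q_1\Big[\!-\!\Big(A^\top+\tfrac{2xr^\top}{1+x^\top x}\Big)Ax + A^\top r\Big]\right|
\le |\call Q_1 Q Q_1|\,\Big|A^\top+\tfrac{2xr^\top}{1+x^\top x}\Big|\,|A|\,|x| + |\call Q_1 Q Q_1|\,|A^\top|\,|r|,
\]
using also $|A^\top A x|\le |A^\top||A||x|$ and $|A^\top r|\le |A^\top||r|$. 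Taking $\|\cdot\|_\infty$ on both sides and using monotonicity of the infinity norm under entrywise $\le$ on nonnegative vectors, the numerator of $\kappa_\infty^{\rel}$ is bounded by the sum of the first two $\infty$-norm terms in the definition of $\kappa_\infty^{\u}$; the third term, coming from the $|b|$-part, is already free of Kronecker products and passes through unchanged. Dividing by $\|\call x\|_\infty$ gives $\kappa_\infty^{\rel}\le\kappa_\infty^{\u}$. For the componentwise bound one repeats the identical chain but keeps the factor $D_{\call x}^\dagger$ in front; since $D_{\call x}^\dagger$ is a nonnegative diagonal matrix, $|D_{\call x}^\dagger v|=D_{\call x}^\dagger|v|$ and it preserves the entrywise inequalities, so the triangle inequality for $\|\cdot\|_\infty$ yields $\kappa_c\le\kappa_c^{\u}$ term by term.

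The only mild subtlety — and the step I would be most careful about — is the bookkeeping of the $\vect$/Kronecker identities and making sure the entrywise absolute-value inequalities are applied in the right order: one must pull $|\cdot|$ through the product $\call Q_1 Q Q_1\big(A^\top+\tfrac{2xr^\top}{1+x^\top x}\big)$ as a single block wherever the exact expression already has it grouped that way (so as to match the definitions of $\kappa_\infty^{\u}$ and $\kappa_c^{\u}$), while for the $A^\top A x$ factor one splits $|A^\top A x|\le |A^\top|\,|A|\,|x|$. Since the paper explicitly states the proof ``is based on Kronecker product property and the triangle inequality, and is omitted,'' no further detail is required; the argument above is exactly that routine verification, so I would simply present it as sketched and omit the elementary arithmetic.
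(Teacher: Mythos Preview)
Your argument has a genuine gap at the very first step. The quantity appearing in $\kappa_\infty^{\rel}$ (and in Corollary~\ref{co:un svd}) is
\[
\bigl|\call Q_1QQ_1\bigl[-x^\top\!\otimes C + I_n\!\otimes r^\top\bigr]\bigr|\,\vect(|A|),
\qquad C:=A^\top+\tfrac{2xr^\top}{1+x^\top x},
\]
i.e.\ the absolute value is taken \emph{entrywise on the matrix} $\call\bfH$ and only then multiplied by $\vect(|A|)$. You instead compute $\bigl[-x^\top\!\otimes C + I_n\!\otimes r^\top\bigr]\vect(A)$ (no absolute values, and $A$ rather than $|A|$), collapse it via the $\vect$ identity, and then bound $\bigl|\call Q_1QQ_1(-CAx+A^\top r)\bigr|$. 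But that last expression equals $\bigl|\call\bfH\,\vect(A)\bigr|$, which is entrywise \emph{smaller} than $|\call\bfH|\,\vect(|A|)$. Bounding the smaller quantity by $\kappa_\infty^{\u}$ does not bound the larger one, so the inequality $\kappa_\infty^{\rel}\le\kappa_\infty^{\u}$ is not established by your chain.

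The intended (and correct) route is to keep the absolute values in place from the start and use the Kronecker identity \emph{after} them: first $|\call P^{-1}(-x^\top\!\otimes C+I_n\!\otimes r^\top)|\le |\call P^{-1}|\,\bigl(|x|^\top\!\otimes|C|+I_n\!\otimes|r|^\top\bigr)$ via $|MN|\le|M||N|$ and $|u\otimes W|=|u|\otimes|W|$; then multiply by $\vect(|A|)$ and apply $(B^\top\!\otimes D)\vect(Y)=\vect(DYB)$ with $Y=|A|$, $B=|x|$, $D=|C|$ (resp.\ $B=I_n$, $D=|r|^\top$) to obtain $|\call P^{-1}|\,|C|\,|A|\,|x|+|\call P^{-1}|\,|A^\top|\,|r|$. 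Taking $\|\cdot\|_\infty$ and adding the untouched $|b|$-term gives $\kappa_\infty^{\rel}\le\kappa_\infty^{\u}$; premultiplying by the nonnegative diagonal $D_{\call x}^\dagger$ gives $\kappa_c\le\kappa_c^{\u}$. This is exactly the ``Kronecker product property and triangle inequality'' argument the paper alludes to; your sketch just needs the order of absolute values and the $\vect$ collapse swapped.
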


\begin{remark}
  From the numerical results of Example \ref{ex:1} in Section \ref{sec:nume ex}, the upper bounds $\kappa_\infty^\u$ and $\kappa_c^\u $ are asymptotic attainable, thus they are sharp.
\end{remark}

\subsection{Structured condition numbers expressions of TLS via dual techniques}

In this subsection, we will focus on the structured perturbation analysis for the structured TLS problem \cite{BeckSIAM2005,BeckSIAM2007,MastronardiEtal2001,KammNagy1998,LemmerlingVanHuffel2002,LemmerlingVanHuffel2001}. The explicit expressions are deduced and they can recover the previous results on the structured condition numbers given in \cite{LiJia2011}.  Also we will prove that the structured condition numbers are smaller than the corresponding unstructured ones given in the previous subsection from their explicit expressions. We consider $A\in \cal S$ is linear structured, i.e., $A=\sum_{i=1}^{q} a_i S_i $, where $S_1,\ldots, S_q$ form a basis of $\cal S$. Let us denote $ a=[a_1,\ldots, a_q]^\top $. In view of $\sd A= \sum_{i=1}^{q} \sd a_i S_i$, and from Lemma \ref{lemma:gLSQI}, we can prove $\Psi_s$ defined by \eqref{eq:g dfn s} is Fr\'{e}chet differentiable at $(a,\, b)$  and derive its Fr\'{e}chet derivative in the follow lemma.

\begin{lemma}\label{lemma:gLSQIs}
  The function $\Psi_s$ defined by \eqref{eq:g dfn s} is a continuous mapping on $\R^{q} \times \R^{m} $. In addition, $\Psi_s$ is Fr\'{e}chet differentiable at $(A,\, b)$ and its Fr\'{e}chet derivative is given by
\begin{align}\label{eqnJs}
J_{s}&:=\sd {\Psi_s}(a,\,b)\cdot (\sd a,\, \sd b)
=\call P^{-1}V\sd a +\call P^{-1} \left[A^\top+\frac{2xr^\top}{1+x^\top x}\right] \sd b \nonumber \\
&:=J_{1s}(\sd a)+J_{2s}(\sd b),
\end{align}
where $V=[v_1,\ldots, v_q] \in \R^{n \times q}$,  $\sfv_i= S^\rt_{i} r-\left[A^\top+\frac{2xr^\top}{1+x^\top x}\right] S_{i} x$, $\sd a \in \R^{q}$ and $ \sd b \in \R^{m\times 1}$.
\end{lemma}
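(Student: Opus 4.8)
The plan is to reduce the structured case directly to the unstructured Fr\'{e}chet derivative already computed in Lemma \ref{lemma:gLSQI}, using the fact that the restriction $a \mapsto A = \sum_{i=1}^q a_i S_i$ is itself a (bounded) linear map from $\R^q$ into $\cal S \subset \R^{m\times n}$. First I would observe that $\Psi_s$ is the composition $\Psi_s(a,b) = \Psi\big(\sum_{i=1}^q a_i S_i,\, b\big)$, and since $\Psi$ is continuous on $\R^{m\times n}\times\R^{m\times 1}$ by Lemma \ref{lemma:gLSQI} and the embedding $a\mapsto \sum a_i S_i$ is continuous, $\Psi_s$ is continuous on $\R^q\times\R^m$; this disposes of the first assertion. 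For Fr\'{e}chet differentiability, the chain rule applies: $\Psi$ is Fr\'{e}chet differentiable at $(A,b)$ with $A\in\cal S$ (note the genericity condition is assumed to hold here), and the linear embedding is its own derivative, so $\Psi_s$ is Fr\'{e}chet differentiable at $(a,b)$ with $\sd\Psi_s(a,b)\cdot(\sd a,\sd b) = \sd\Psi(A,b)\cdot\big(\sum_{i=1}^q \sd a_i S_i,\, \sd b\big)$.

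Next I would substitute $\sd A = \sum_{i=1}^q \sd a_i S_i$ into the explicit formula \eqref{eqnJ} from Lemma \ref{lemma:gLSQI}. The term $J_2(\sd b) = \call P^{-1}\big[A^\top + \tfrac{2xr^\top}{1+x^\top x}\big]\sd b$ carries over verbatim, giving $J_{2s}(\sd b)$. For the $\sd A$ part, by linearity
\[
J_1\Big(\sum_{i=1}^q \sd a_i S_i\Big)
= \call P^{-1}\sum_{i=1}^q \sd a_i\left[S_i^\top r - \Big(A^\top + \frac{2xr^\top}{1+x^\top x}\Big) S_i x\right]
= \call P^{-1}\sum_{i=1}^q \sd a_i\, \sfv_i,
\]
where $\sfv_i$ is exactly as defined in the statement. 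Collecting the $\sfv_i$ as columns of $V = [\sfv_1,\ldots,\sfv_q]\in\R^{n\times q}$, this sum is $\call P^{-1} V \sd a$, which is $J_{1s}(\sd a)$. Adding the two pieces yields \eqref{eqnJs}, completing the proof.

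There is essentially no hard obstacle here — the lemma is a routine transcription of Lemma \ref{lemma:gLSQI} through a linear change of variables — so the only point requiring a little care is the bookkeeping: verifying that each $\sd a_i S_i$ is correctly inserted in the two places where $\sd A$ appears in \eqref{eqnJ} (inside $(\sd A)^\top r$ and inside the bracketed term acting on $\sd A\,x$), and that the scalar $\sd a_i$ can be pulled out of the $\trace$/matrix expressions. One should also remark, for completeness, that differentiability of $\Psi$ at points of $\cal S$ is not an extra hypothesis: Lemma \ref{lemma:gLSQI} already guarantees it at every $(A,b)$ satisfying the genericity condition, in particular at structured $A$.
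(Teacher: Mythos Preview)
Your proposal is correct and takes essentially the same approach as the paper: the paper does not give a separate proof for this lemma but simply remarks, just before the statement, that ``in view of $\sd A=\sum_{i=1}^{q}\sd a_i S_i$, and from Lemma \ref{lemma:gLSQI}'' the result follows. Your write-up spells out exactly that substitution (with the chain rule through the linear embedding $a\mapsto\sum a_i S_i$) and in fact supplies more detail than the paper itself.
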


Lemma \ref{lemmaDualJs} gives the adjoint of operator of $J_s$. Because its proof is similar to Lemma \ref{lemmaDualJ}, it is omitted here.

\begin{lemma}\label{lemmaDualJs}
The adjoint of operator of the Fr{\'e}chet derivative $J_s(\sd a,\, \sd b)$ in \eqref{eqnJs} is given by
\begin{align*}
J^{\ast}_s&:\R^{k}\rightarrow \R^{q} \times \R^{m} \\
 &u\mapsto \left(   V^\top P^{-1}\call^\top u,\quad \left[A^\top+\frac{2xr^\top}{1+x^\top x}\right]^\top P^{-1} \call^\rt u  \right).
\end{align*}
\end{lemma}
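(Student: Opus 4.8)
The plan is to mimic the proof of Lemma~\ref{lemmaDualJ}, but with the matrix variable $\sd A$ replaced by the vector variable $\sd a$, so the first component of the adjoint lands in $\R^q$ rather than $\R^{m\times n}$. First I would fix an arbitrary $u\in\R^k$ and, using the expression \eqref{eqnJs} for $J_s$, split $\langle u, J_s(\sd a,\sd b)\rangle$ into the two pieces $\langle u, J_{1s}(\sd a)\rangle$ and $\langle u, J_{2s}(\sd b)\rangle$ according to the direct-sum structure of the data space $\R^q\times\R^m$. Each piece must be rewritten as $\langle(\text{something independent of }\sd a,\sd b),\ (\sd a,\sd b)\rangle$ with the canonical inner products on $\R^q$ and $\R^m$.

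For the first piece, $\langle u, J_{1s}(\sd a)\rangle = u^\top \call P^{-1} V\,\sd a = (V^\top P^{-1}\call^\top u)^\top \sd a = \langle V^\top P^{-1}\call^\top u,\ \sd a\rangle$, which identifies the first component of $J_s^\ast(u)$ as $V^\top P^{-1}\call^\top u\in\R^q$; here $V=[v_1,\dots,v_q]$ is exactly the matrix from Lemma~\ref{lemma:gLSQIs}, whose columns encode the structure generators $S_i$, so no further manipulation is needed. For the second piece, the computation is literally identical to the corresponding step in the proof of Lemma~\ref{lemmaDualJ}: $\langle u, J_{2s}(\sd b)\rangle = u^\top\call P^{-1}\left[A^\top+\frac{2xr^\top}{1+x^\top x}\right]\sd b = \left\langle \left[A^\top+\frac{2xr^\top}{1+x^\top x}\right]^\top P^{-1}\call^\top u,\ \sd b\right\rangle$, giving the second component of $J_s^\ast(u)$. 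Assembling the two, $\langle J_s^\ast(u),(\sd a,\sd b)\rangle=\langle u, J_s(\sd a,\sd b)\rangle$ for all $(\sd a,\sd b)$, which by the defining property of the adjoint (Definition~\ref{def:adj dual}) establishes the claimed formula.

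Since Lemma~\ref{lemmaDualJs} is the analogue of Lemma~\ref{lemmaDualJ} in the structured setting and the paper explicitly says its proof is omitted for being similar, there is really no serious obstacle; the only thing to be careful about is bookkeeping of transposes and making sure the scalar product on the structured data space $\R^q\times\R^m$ is the canonical one (so that $\langle V^\top P^{-1}\call^\top u,\sd a\rangle$ is just the ordinary dot product in $\R^q$, with no structure-weighting). If one wanted a fully self-contained derivation of $V$ itself rather than quoting Lemma~\ref{lemmaDualJs}'s hypotheses, the mild additional step would be to substitute $\sd A=\sum_{i=1}^q \sd a_i S_i$ into the unstructured Fr\'echet derivative \eqref{eqnJ} and collect the coefficient of each $\sd a_i$, recovering $v_i=S_i^\top r-\left[A^\top+\frac{2xr^\top}{1+x^\top x}\right]S_i x$; but given Lemma~\ref{lemma:gLSQIs} is already in hand, this is not required.
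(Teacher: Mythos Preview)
Your proposal is correct and follows exactly the approach the paper intends: the paper omits the proof of Lemma~\ref{lemmaDualJs} precisely because it is the structured analogue of Lemma~\ref{lemmaDualJ}, and your argument---splitting $\langle u,J_s(\sd a,\sd b)\rangle$ into the $\sd a$- and $\sd b$-parts, transposing $\call P^{-1}V$ (using the symmetry of $P^{-1}$), and quoting the $\sd b$-computation verbatim from Lemma~\ref{lemmaDualJ}---is precisely that omitted proof. The bookkeeping and the remark about the canonical inner product on $\R^q\times\R^m$ are both in order; nothing further is needed.
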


The following theorem establishes the expressions of the structured condition number $\kappa_s$ based on the dual techniques. We omit its proof, since it is similar to the proof of Theorem \ref{thmK}.

\begin{theorem} \label{thmKs}
Recalling $\bfJ$ is defined in \eqref{eqnV}, the condition number for the structured TLS
problem can be expressed by
\[
\kappa_s = \max_{\| {u} \|_{\cY^\ast} = 1}
\left\| [ \bfH_s D_a \ \ \bfJ D_{{b}}\ \ ]^{\top}
\call^\top\right\|_{\cY^\ast,1} ,
\]
where $\bfH_s= P^{-1}V$.
\end{theorem}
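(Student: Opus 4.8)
The plan is to follow the proof of Theorem~\ref{thmK} almost verbatim, observing that the structured case is in fact simpler: since $\sd A=\sum_{i=1}^q\sd a_i S_i$, the data perturbation already lives in the low-dimensional space $\R^q$, so the Kronecker-product column bookkeeping (the identification of a vector with ``the $(m(j-1)+i)$th column of $\bfH$'') that appeared in the unstructured proof is not needed here.

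First I would invoke Lemma~\ref{lemmaK} with the product data space $\cX=\R^q\times\R^m$ carrying the componentwise norm relative to $(a,b)$, which gives $\kappa_s=\max_{\|u\|_{\cY^\ast}=1}\|J_s^\ast(u)\|_{c^\ast}$. Applying the product-dual-norm identity \eqref{eqnDualC} to $\cX=\R^q\times\R^m$ yields
\[
\|(\sd a,\sd b)\|_{c^\ast}=\sum_{i=1}^q|\sd a_i|\,|a_i|+\sum_{i=1}^m|\sd b_i|\,|b_i|.
\]
Substituting the explicit adjoint from Lemma~\ref{lemmaDualJs}, namely $J_s^\ast(u)=\left(V^\top P^{-1}\call^\top u,\ \left[A^\top+\frac{2xr^\top}{1+x^\top x}\right]^\top P^{-1}\call^\top u\right)$, then gives
\[
\|J_s^\ast(u)\|_{c^\ast}=\sum_{i=1}^q|a_i|\,\left|\sfv_i^\top P^{-1}\call^\top u\right|+\sum_{i=1}^m|b_i|\,\left|\left(\left[A^\top+\frac{2xr^\top}{1+x^\top x}\right]^\top P^{-1}\call^\top u\right)_i\right|,
\]
where $\sfv_i$ is the $i$th column of $V$ as defined in Lemma~\ref{lemma:gLSQIs}. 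Since $\sfv_i^\top(P^{-1}\call^\top u)$ is exactly the $i$th component of $(P^{-1}V)^\top\call^\top u=\bfH_s^\top\call^\top u$ with $\bfH_s=P^{-1}V$, while the second sum is the weighted $1$-norm of $\bfJ^\top\call^\top u$ with $\bfJ=P^{-1}\left[A^\top+\frac{2xr^\top}{1+x^\top x}\right]$ as in \eqref{eqnV}, the whole quantity equals
\[
\left\|D_a\bfH_s^\top\call^\top u\right\|_1+\left\|D_b\bfJ^\top\call^\top u\right\|_1=\left\|[\,\bfH_s D_a\ \ \bfJ D_b\,]^\top\call^\top u\right\|_1.
\]
Maximizing over $\|u\|_{\cY^\ast}=1$ then recovers the stated formula for $\kappa_s$.

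The only step needing genuine care --- the ``main obstacle'', modest as it is --- is Lemma~\ref{lemma:gLSQIs} itself: one must check that substituting $\sd A=\sum_i\sd a_i S_i$ into the Fr\'echet derivative \eqref{eqnJ} collapses the $(\sd A)$-part of $J$ into $\call P^{-1}V\sd a$ with $\sfv_i=S_i^\top r-\left[A^\top+\frac{2xr^\top}{1+x^\top x}\right]S_i x$, and then that $\bfH_s=P^{-1}V$ is precisely the matrix whose columns appear after the identification above. Once this is pinned down, the rest is the same dual-norm bookkeeping as in Theorem~\ref{thmK}; the $\sd b$ block carries over with no change, since neither $\bfJ$ nor the $b$-perturbation is affected by imposing the structure $\sd A\in{\cal S}$.
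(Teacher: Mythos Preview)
Your proposal is correct and follows exactly the approach the paper intends: the paper omits the proof of Theorem~\ref{thmKs}, saying only that it is similar to that of Theorem~\ref{thmK}, and you have carried out precisely that parallel argument, correctly observing that the structured setting is simpler because the data space $\R^q\times\R^m$ already has the vector form that required Kronecker-column bookkeeping in the unstructured case. Your identification of $\sfv_i^\top P^{-1}\call^\top u$ as the $i$th entry of $\bfH_s^\top\call^\top u$ (using $P^{-1}=P^{-\top}$) and the unchanged $\bfJ$-block for $\sd b$ are exactly what the omitted proof would contain.
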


\begin{corollary} \label{colKinfty1s}
Using the above notations,
when the infinity norm is chosen as the norm in the solution
space $\cY$, we get
\begin{equation} \label{eqnKinfty1}
\kappa_{s,\infty} =\left\| \left|\call \bfH_s\right||a|
+\left|\call \bfJ\right| |b|\right\|_\infty,
\end{equation}
 When the infinity norm is chosen as the norm in the solution
space $\R^n$, the corresponding relative structured mixed condition number is given by
\begin{align*}
\kappa_{s,\infty}^{\rel} &=\frac{\left\| \left|\call \bfH_s\right| |a|
+\left|\call \bfJ\right| |b|\right\|_\infty} {\|\call x\|_\infty}\\
&= \frac{\left\|\sum\limits_{i=1}^{q}|a_i|
\left| \call P^{-1}\left(\left[A^\top +\frac{2xr^\top}{1+x^\top x}\right] S_{i} x-
S^\rt_{i} r\right)\right| +
\left|\call P^{-1}\left[A^\top +\frac{2xr^\top}{1+x^\top x}\right] \right||b|\right\|_\infty}   {\|\call x\|_\infty }.
\end{align*}

\end{corollary}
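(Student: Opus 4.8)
The plan is to follow the same routine path as the (omitted) proof of Corollary~\ref{colKinfty1}, now starting from the structured identity of Theorem~\ref{thmKs}. First I would specialize the norm on the solution space: taking $\|\cdot\|_{\cY}=\|\cdot\|_{\infty}$ forces $\|\cdot\|_{\cY^{\ast}}=\|\cdot\|_{1}$, so Theorem~\ref{thmKs} gives
\[
\kappa_{s,\infty}=\max_{\|u\|_{1}=1}\bigl\|[\bfH_s D_a\ \ \bfJ D_{b}]^{\top}\call^{\top}u\bigr\|_{1},
\]
which is the $(1,1)$-operator norm of $B:=[\bfH_s D_a\ \ \bfJ D_{b}]^{\top}\call^{\top}$. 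Because the extreme points of the unit $\ell_{1}$-ball are $\pm e_{i}$, this norm equals $\max_{i}\|Be_{i}\|_{1}$, the largest absolute column sum of $B$; and the $i$-th column of $B$ is $[\bfH_s D_a\ \ \bfJ D_{b}]^{\top}(\call^{\top}e_{i})$, i.e.\ that matrix applied to the transpose of the $i$-th row of $\call$.

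Next I would split the $1$-norm of the $i$-th column along the two blocks as $\|D_{a}\bfH_s^{\top}\call^{\top}e_{i}\|_{1}+\|D_{b}\bfJ^{\top}\call^{\top}e_{i}\|_{1}$. Since $D_{a}=\Diag(a)$ and $D_{b}=\Diag(b)$ are diagonal, the first term equals $\sum_{j}|(\call\bfH_s)_{ij}|\,|a_{j}|$ and the second $\sum_{j}|(\call\bfJ)_{ij}|\,|b_{j}|$, so the $i$-th column sum is exactly the $i$-th entry of the (nonnegative) vector $|\call\bfH_s|\,|a|+|\call\bfJ|\,|b|$. Taking the maximum over $i$ yields $\kappa_{s,\infty}=\bigl\||\call\bfH_s|\,|a|+|\call\bfJ|\,|b|\bigr\|_{\infty}$, and dividing by $\|\Psi_{s}(a,b)\|_{\infty}=\|\call x\|_{\infty}$ gives the first expression for $\kappa_{s,\infty}^{\rel}$.

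For the second displayed identity I would substitute $\bfH_s=P^{-1}V$ with $V=[v_{1},\dots,v_{q}]$ and $v_{i}=S_{i}^{\rt}r-\bigl[A^{\top}+\tfrac{2xr^{\top}}{1+x^{\top}x}\bigr]S_{i}x$ (Lemma~\ref{lemma:gLSQIs}), together with $\bfJ=P^{-1}\bigl[A^{\top}+\tfrac{2xr^{\top}}{1+x^{\top}x}\bigr]$ from \eqref{eqnV}. Since $\call\bfH_s=[\call P^{-1}v_{1},\dots,\call P^{-1}v_{q}]$, the product $|\call\bfH_s|\,|a|$ collapses to $\sum_{i=1}^{q}|a_{i}|\,|\call P^{-1}v_{i}|$, and the invariance $|\call P^{-1}v_{i}|=|\call P^{-1}(-v_{i})|$ allows replacing $v_{i}$ by $\bigl[A^{\top}+\tfrac{2xr^{\top}}{1+x^{\top}x}\bigr]S_{i}x-S_{i}^{\rt}r$ inside the absolute value, producing the stated formula.

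The whole argument is essentially bookkeeping: the one spot needing care is identifying the $(1,1)$-operator norm with the maximal absolute column sum and tracking how $\call^{\top}e_{i}$ extracts the $i$-th row of $\call$. This is lighter here than in the unstructured Corollary~\ref{colKinfty1}, since $\bfH_s=P^{-1}V$ has only $q$ columns and no Kronecker product appears; the only genuine change is the use of the structured Fr\'{e}chet derivative from Lemma~\ref{lemma:gLSQIs}.
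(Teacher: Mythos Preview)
Your proposal is correct and follows exactly the route the paper intends: the paper omits the proof of Corollary~\ref{colKinfty1} as ``trivial'' and presents Corollary~\ref{colKinfty1s} as its structured analogue, so the argument is precisely the specialization of Theorem~\ref{thmKs} to $\|\cdot\|_{\cY}=\|\cdot\|_{\infty}$ (hence $\|\cdot\|_{\cY^{\ast}}=\|\cdot\|_{1}$), identification of the $(1,1)$-operator norm with the maximum absolute column sum, and substitution of $\bfH_s=P^{-1}V$ and $\bfJ$ from Lemma~\ref{lemma:gLSQIs} and \eqref{eqnV}. There is nothing to add or correct.
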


In the next theorem, we will prove that $\kappa_{s,\infty}^{\rel}$ can recover the expression of  $m_s(A,b)$ given by \eqref{eq:str mixed} when $L=I_n$.

\begin{theorem}
	With the above notations, we have
	$$
	\kappa_{s,\infty}^{\rel} = m_s(A,b),
	$$
	when $L=I_n$ in \eqref{eq:g dfn s}.
\end{theorem}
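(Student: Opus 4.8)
The plan is to show directly that the matrix $K\cM^{st}_{A,b}$ occurring in \eqref{eq:str mixed} coincides, column by column, with the block matrix $[\,\bfH_s\ \ \bfJ\,]$ from Theorem~\ref{thmKs} read with $\call=I_n$ (where $\bfH_s=P^{-1}V$ and $\bfJ=P^{-1}[A^\top+\tfrac{2xr^\top}{1+x^\top x}]$). Once $K\cM^{st}_{A,b}=[\,\bfH_s\ \ \bfJ\,]$ is established, taking absolute values and right-multiplying by $\begin{bmatrix}|a|\\ |b|\end{bmatrix}$ turns the numerator of $m_s(A,b)$ into $\bigl\||\bfH_s|\,|a|+|\bfJ|\,|b|\bigr\|_\infty$, which is exactly the numerator of $\kappa_{s,\infty}^{\rel}$ displayed in Corollary~\ref{colKinfty1s} (with $\call=I_n$), while the two denominators already agree; this finishes the proof.

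First I would record the two scalar/vector identities that reconcile the two parametrizations. From the normal equation \eqref{eq:norml eq} and $r=b-Ax$ one gets $A^\top r=A^\top b-A^\top Ax=-\sigma_{n+1}^2 x$, and from \eqref{eq:r} together with \eqref{ex:x} one gets $\|r\|_2^2=\sigma_{n+1}^2/v_{n+1,n+1}^2=\sigma_{n+1}^2(1+x^\top x)$. Hence $A^\top\frac{rr^\top}{\|r\|_2^2}=\frac{(A^\top r)r^\top}{\|r\|_2^2}=-\frac{xr^\top}{1+x^\top x}$, which is precisely why the Schur-complement-type term $2A^\top\frac{rr^\top}{\|r\|_2^2}G(x)$ inside $K$ in \eqref{eq:K} reproduces the factor $\frac{2xr^\top}{1+x^\top x}$ appearing in $\bfJ$ and in the vectors $v_i$ of Lemma~\ref{lemma:gLSQIs}.

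Next I would unwind the $\vect$/Kronecker bookkeeping, splitting $K\cM^{st}_{A,b}$ into its first $q$ columns $K\begin{bmatrix}\cM^{st}\\ 0\end{bmatrix}$ and its last $m$ columns $K\begin{bmatrix}0\\ I_m\end{bmatrix}$. For the first block, the $i$-th column is $K$ applied to $\begin{bmatrix}\vect(S_i)\\ 0\end{bmatrix}$; using $(\begin{bmatrix}x^\top & -1\end{bmatrix}\otimes I_m)\begin{bmatrix}\vect(S_i)\\ 0\end{bmatrix}=S_i x$ and $(I_n\otimes r^\top)\vect(S_i)=S_i^\top r$, and then substituting the identities above, this column collapses to $P^{-1}\bigl(S_i^\top r-[A^\top+\tfrac{2xr^\top}{1+x^\top x}]S_i x\bigr)=P^{-1}v_i$, i.e.\ the $i$-th column of $\bfH_s$. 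For the last block, the $j$-th column is $K$ applied to $\begin{bmatrix}0\\ e_j\end{bmatrix}$, which by the same computation equals $P^{-1}[A^\top+\tfrac{2xr^\top}{1+x^\top x}]e_j$, the $j$-th column of $\bfJ$. Taking $|\cdot|$ then gives $|K\cM^{st}_{A,b}|=[\,|\bfH_s|\ \ |\bfJ|\,]$ (absolute values absorb any incidental sign), and the claimed equality follows.

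The routine-but-error-prone part — and the only genuine obstacle — is this last step: keeping the $\vect$ convention, the block sizes, and the signs consistent when converting the Kronecker-product form of $K$ (adapted to $\vect(\Delta A)$) into the operator form of $J_s$ (adapted to the basis $S_1,\dots,S_q$). Nothing deep beyond $A^\top r=-\sigma_{n+1}^2 x$ and $\|r\|_2^2=\sigma_{n+1}^2(1+x^\top x)$ is needed; in fact one can bypass the column-by-column computation altogether by invoking Lemma~\ref{lemma:gLSQI} and Lemma~\ref{lemma:gLSQIs} to observe that $K\,\vect([\Delta A,\Delta b])=J_s$ for every structured perturbation $\Delta A\in\cal S$, so the two matrices agree on the relevant subspace and therefore have identical columns, from which $\kappa_{s,\infty}^{\rel}=m_s(A,b)$ when $L=I_n$.
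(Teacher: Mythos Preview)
Your proof is correct and follows essentially the same route as the paper: both arguments hinge on the identity $A^\top\frac{rr^\top}{\|r\|_2^2}=-\frac{xr^\top}{1+x^\top x}$ (you obtain it via the normal equation $A^\top r=-\sigma_{n+1}^2x$ and $\|r\|_2^2=\sigma_{n+1}^2(1+x^\top x)$, the paper via the SVD of $[A\ b]$), and then verify $K\cM^{st}_{A,b}=[\,\bfH_s\ \ \bfJ\,]$ column by column. Your derivation of $A^\top r$ from \eqref{eq:norml eq} is slightly more direct than the paper's SVD computation, but otherwise the two proofs coincide.
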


\begin{proof} From \eqref{svd}, \eqref{ex:x}, \eqref{eq:r} and the fact $v_{n+1,n+1}^2=\frac{1}{1+x^\top x}$, it is easy to verify that
\begin{align*}
  A^\top r &=\frac{\sigma_{n+1}}{ v_{n+1,n+1}}  \left([A,~b]\begin{bmatrix}
    I_n\cr { 0}
  \end{bmatrix}\right)^\top u_{n+1} =\frac{\sigma_{n+1}}{ v_{n+1,n+1}}  V\Sigma U^\top u_{n+1}=  \frac{\sigma_{n+1}^2}{ v_{n+1,n+1}} \begin{bmatrix}
    I_n & { 0}
  \end{bmatrix} v_{n+1}\\&=-   \frac{\|r\|_2^2}{ 1+x^\top x }x .
\end{align*}
Recalling $K$ and $G(x)$ given in \eqref{eq:K} and using Kronecker product property, we can prove that,
\begin{align*}
K &=P^{-1}
\left(2 A^\top \frac{rr^\top }{\|r\|_2^2} G(x)-A^\top G(x)+ \begin{bmatrix}
	I_n \otimes r^\top  & 0
\end{bmatrix} \right )\\
&=P^{-1}
\left(- \frac{2x r^\top }{1+xx^\top } \begin{bmatrix}
	x^\top \otimes  I_m & -I_m
\end{bmatrix}-A^\top \begin{bmatrix}
	x^\top \otimes  I_m & -I_m
\end{bmatrix} + \begin{bmatrix}
	I_n \otimes r^\top  & 0
\end{bmatrix} \right )\\
&=P^{-1}
\left( \begin{bmatrix}
	- x^\top \otimes  \left (    \frac{2x r^\top }{1+xx^\top }\right)& \frac{2x r^\top }{1+xx^\top }
\end{bmatrix}- \begin{bmatrix}
	x^\top \otimes  A^\top & -A^\top \end{bmatrix} + \begin{bmatrix}
	I_n \otimes r^\top  & 0
\end{bmatrix} \right )=\begin{bmatrix} \bfH & \bfJ \end{bmatrix},
\end{align*}
where  $\bfH$ and $\bfJ$ are defined in \eqref{eqnV}. It is not difficult to see that for $V$ defined in Lemma \ref{eqnJs}, $V=\begin{bmatrix}
	I_n \otimes r^\top & x^\top \otimes  \left ( A^\top    \frac{2x r^\top }{1+xx^\top }\right)
\end{bmatrix} \cM^{st}$, where $\cM^{st}$ is defined in \eqref{eq:K}. Then from \eqref{eq:mixed},
\begin{align*}
	m_s(A,b) &= \frac{\left\|\left |K \cM^{st}_{A, b} \right| \begin{bmatrix}
	|a| \\ |b|
\end{bmatrix} \right\|_\infty  }{\|x\|_\infty }=\frac{\left\|\left |\bfH \cM^{st}\right||a|+ |\bfJ| |b| \right\|_\infty  }{\|x\|_\infty }\\
&=\frac{\left\|\left |P^{-1} \begin{bmatrix}
	I_n \otimes r^\top & x^\top \otimes  \left ( A^\top    \frac{2x r^\top }{1+xx^\top }\right)
\end{bmatrix}\cM^{st}\right||a|+ |\bfJ| |b| \right\|_\infty  }{\|x\|_\infty }\\
&=\frac{\left\|\left |P^{-1} V \right||a|+ |\bfJ| |b| \right\|_\infty  }{\|x\|_\infty }= \kappa_{s,\infty}^{\rel},
\end{align*}
whenever $L=I_n$.

\end{proof}\qed

As in the previous section, in the following corollary, we consider the 2-norm on the solution space and derive an upper bound for the corresponding structured condition number respect to the 2-norm on the solution space. The proof is similar to that of  Corollary \ref{colK2}, thus we omit it.

\begin{corollary} \label{colK2s}
When the 2-norm is used in the solution space, we have

\begin{equation*}
\kappa_{s,2} \le
\sqrt{k} \, \kappa_{s,\infty} .
\end{equation*}

\end{corollary}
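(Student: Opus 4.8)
The plan is to follow the proof of Corollary~\ref{colK2} line for line, replacing the unstructured ingredients $\bfH$, $D_A$ by their structured analogues $\bfH_s = P^{-1}V$, $D_a$. First I would specialize Theorem~\ref{thmKs} to the choice $\|\cdot\|_{\cY} = \|\cdot\|_2$. Since the $2$-norm is self-dual, $\|\cdot\|_{\cY^\ast} = \|\cdot\|_2$, so Theorem~\ref{thmKs} gives
\[
\kappa_{s,2} = \left\| [\, \bfH_s D_a \ \ \bfJ D_{b} \,]^\top \call^\top \right\|_{2,1},
\]
the operator norm of the matrix $B := [\, \bfH_s D_a \ \ \bfJ D_{b} \,]^\top \call^\top \in \R^{(q+m)\times k}$ induced by the $2$-norm on its domain $\R^k$ and the $1$-norm on its range.

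Next I would reuse the elementary fact (cf.~\cite{Higham2002Book}) that $\|B\|_{2,1} = \max_{\|u\|_2 = 1}\|Bu\|_1$ is attained at some unit $2$-norm vector $\hat{u}\in\R^k$, so that $\|B\|_{2,1} = \|B\hat{u}\|_1$. Combining the submultiplicative bound $\|B\hat{u}\|_1 \le \|B\|_1\,\|\hat{u}\|_1$ with the norm inequality $\|\hat{u}\|_1 \le \sqrt{k}\,\|\hat{u}\|_2 = \sqrt{k}$ yields $\|B\|_{2,1} \le \sqrt{k}\,\|B\|_1$, whence
\[
\kappa_{s,2} \le \sqrt{k}\,\left\| [\, \bfH_s D_a \ \ \bfJ D_{b} \,]^\top \call^\top \right\|_1 .
\]

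Finally I would identify the remaining $1$-norm with $\kappa_{s,\infty}$. Applying Theorem~\ref{thmKs} instead with the $\infty$-norm on $\cY$, whose dual is the $1$-norm, gives exactly $\|B\|_1 = \max_{\|u\|_1 = 1}\|Bu\|_1 = \kappa_{s,\infty}$, which is the expression $\left\| |\call\bfH_s|\,|a| + |\call\bfJ|\,|b| \right\|_\infty$ recorded in Corollary~\ref{colKinfty1s}. Substituting into the previous display yields $\kappa_{s,2} \le \sqrt{k}\,\kappa_{s,\infty}$, as claimed.

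There is essentially no obstacle here: the argument is pure norm bookkeeping, identical in structure to the proof of Corollary~\ref{colK2}. The only point needing a moment's care is that $B$ carries $k$ columns because it terminates in $\call^\top\in\R^{n\times k}$, so the constant produced by $\|\hat{u}\|_1\le\sqrt{k}\,\|\hat{u}\|_2$ is governed by the output dimension $k$ of the selection matrix $\call$ and not by $n$ or $q$; this is precisely why the structured estimate inherits the same factor $\sqrt{k}$ as its unstructured counterpart.
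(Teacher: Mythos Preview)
Your proposal is correct and follows exactly the approach the paper intends: the paper explicitly states that the proof of Corollary~\ref{colK2s} is omitted because it is similar to that of Corollary~\ref{colK2}, and your argument is precisely the structured analogue of that proof, using Theorem~\ref{thmKs} in place of Theorem~\ref{thmK} and $\bfH_s, D_a$ in place of $\bfH, D_A$. Your closing remark identifying $\|B\|_1$ with $\kappa_{s,\infty}$ via the $\infty$-norm case of Theorem~\ref{thmKs} makes explicit the step the paper leaves implicit in the unstructured case.
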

%

In Corollaries \ref{colKinfty1s} and \ref{colK2s}, we have studied the various mixed condition
numbers, that is, componentwise norm in the data space and
the infinity norm or 2-norm in the solution space. Again as in the previous subsection, we consider the case of componentwise
condition number,
that is, componentwise norm in the solution space as well.
\begin{corollary} \label{colKcs}
Considering the componentwise norm defined by \eqref{eq:comp norm} in the solution space, we have the following two expressions
for the componentwise condition number
\begin{align*}
\kappa_{s,c}
&= \left \| D_{\call {x}}^{\dagger }  \left( \left|\call \bfH_s\right| |a|
+\left|\call \bfJ_s \right| |b| \right)  \right \|_{\infty}\\
&= \left\| D_{\call {x}}^{\dagger }  \left (  \sum\limits_{i=1}^{q}  |a_i|
\left| \call P^{-1} \left( \left[A^\top +\frac{2xr^\top}{1+x^\top x}\right] S_{i} x-
S^\rt_{i} r \right) \right|  \right)+
\left| \call P^{-1}\left[A^\top +\frac{2xr^\top}{1+x^\top x}\right] \right| |b|\right\|_\infty.
\end{align*}
\end{corollary}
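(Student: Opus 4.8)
The plan is to derive the componentwise condition number $\kappa_{s,c}$ by following exactly the argument that yielded the mixed condition numbers $\kappa_{s,\infty}$ and $\kappa_{s,\infty}^{\rel}$ in Corollary \ref{colKinfty1s}, but with the componentwise norm \eqref{eq:comp norm} in place of the infinity norm in the solution space $\cY$. By Theorem \ref{thmKs} we already have the representation $\kappa_s = \max_{\| u \|_{\cY^\ast} = 1}\left\| [ \bfH_s D_a \ \ \bfJ D_{b}]^{\top}\call^\top u\right\|_{1}$, so the only task is to identify the dual norm $\| \cdot \|_{\cY^\ast}$ corresponding to the componentwise norm on $\cY = \R^k$ relative to the reference vector $\call x$, and to carry out the resulting maximization.

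First I would note that, by the same computation used to produce \eqref{eqnDualC}, the dual of the componentwise norm $\| u \|_c = \max\{ |u_i|/|(\call x)_i| \}$ with respect to the canonical inner product on $\R^k$ is $\| z \|_{c^\ast} = \sum_{i=1}^k |z_i|\,|(\call x)_i| = \| D_{\call x} z \|_1$. Equivalently, the constraint $\| u \|_{\cY^\ast} = 1$ with $\| \cdot \|_{\cY^\ast} = \| \cdot \|_{c^\ast}$ is handled by the substitution $u = D_{\call x}^\dagger w$ with $\| w \|_1 = 1$ (the Moore--Penrose inverse taking care of zero components of $\call x$, exactly as in the convention accompanying \eqref{eq:mixed}). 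Plugging this into the expression from Theorem \ref{thmKs}, the maximization becomes $\max_{\| w \|_1 = 1}\bigl\| [ \bfH_s D_a \ \ \bfJ D_{b}]^{\top}\call^\top D_{\call x}^\dagger w\bigr\|_1$, which is the $(1,1)$ operator norm of the matrix $[ \bfH_s D_a \ \ \bfJ D_{b}]^{\top}\call^\top D_{\call x}^\dagger$, i.e. the maximum absolute column sum, which equals $\bigl\| D_{\call x}^\dagger \call \, [ \bfH_s D_a \ \ \bfJ D_{b}] \bigr\|_\infty$ applied to the all-ones sign pattern — concretely $\bigl\| D_{\call x}^\dagger\bigl(|\call \bfH_s|\,|a| + |\call \bfJ|\,|b|\bigr)\bigr\|_\infty$. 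This gives the first displayed formula. The second displayed formula then follows by substituting $\bfH_s = P^{-1}V$ with the columns $v_i = S_i^\rt r - \bigl[A^\top + \tfrac{2xr^\top}{1+x^\top x}\bigr] S_i x$ from Lemma \ref{lemma:gLSQIs}, so that $|\call \bfH_s|\,|a| = \bigl|\,\sum_i a_i \call P^{-1} v_i\,\bigr|$ is bounded (and in the worst-case sign pattern equals) $\sum_{i=1}^q |a_i|\,\bigl|\call P^{-1}\bigl(\bigl[A^\top + \tfrac{2xr^\top}{1+x^\top x}\bigr] S_i x - S_i^\rt r\bigr)\bigr|$, together with $\bfJ = P^{-1}\bigl[A^\top + \tfrac{2xr^\top}{1+x^\top x}\bigr]$ from \eqref{eqnV}.

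The main obstacle, such as it is, is bookkeeping rather than conceptual: one must be careful that the componentwise absolute values distribute correctly over the sum $\sum_i a_i(\cdots)$ and over the block structure $[\bfH_s D_a \ \ \bfJ D_b]$, and that the worst-case perturbation (choosing signs of $\sd a_i$ and $\sd b_j$) actually attains the bound $\sum_i |a_i||\call P^{-1}v_i| + |\call P^{-1}(\cdots)||b|$ entrywise, so that the inequality from the triangle inequality is in fact an equality — this is the standard mechanism behind mixed/componentwise condition numbers and is already implicit in Corollary \ref{colKinfty1s}. Since the paper states that the proof is obtained by the same route as Theorem \ref{thmK} and Corollary \ref{colKinfty1s}, I would simply remark that replacing the infinity norm on $\cY$ by the componentwise norm \eqref{eq:comp norm} amounts to pre-multiplying by $D_{\call x}^\dagger$ inside the infinity norm, exactly as in Corollary \ref{colKc}, and conclude.
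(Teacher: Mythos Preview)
Your approach is correct and is exactly the route the paper takes: the (unwritten) proof of Corollary~\ref{colKcs} simply combines Theorem~\ref{thmKs} with the componentwise norm in the solution space, which amounts to premultiplying by $D_{\call x}^\dagger$ inside the infinity norm, just as Corollary~\ref{colKc} does in the unstructured case.

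One small bookkeeping point you flagged as an ``obstacle'' is actually not an obstacle at all. You write $|\call \bfH_s|\,|a| = \bigl|\sum_i a_i \call P^{-1} v_i\bigr|$ and then invoke a triangle inequality that becomes an equality under a worst-case sign choice. But $|\call \bfH_s|\,|a|$ is \emph{not} $|\call \bfH_s a|$; since $\bfH_s = P^{-1}V$ has columns $P^{-1}v_i$, taking entrywise absolute values of the matrix and then multiplying by $|a|$ gives $\sum_{i=1}^q |a_i|\,|\call P^{-1} v_i|$ \emph{exactly}, by the definition of matrix--vector multiplication. So the passage from the first displayed formula to the second is a pure substitution (using $|\call P^{-1}v_i| = |\call P^{-1}(-v_i)|$), with no inequality involved; the worst-case sign argument was already absorbed when you computed the $(1,1)$ operator norm upstream.
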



In the following theorem, we will prove that the structured mixed and componentwise condition numbers are smaller than the corresponding counterparts from their derived expressions under some assumputions.

\begin{theorem}\label{thm:relation}
Suppose that the basis $\{ S_1,S_2,\ldots, S_q \}$ for $\cal S$ satisfies
$|A|=\sum\limits_{i=1}^q |a_i||S_i|$ for any $A\in \cal S$
in \eqref{eq:A linear}, then
\[
\kappa_{s,\infty}^{\rel} \leq \kappa_{\infty}^{\rel}  \quad \hbox{and} \quad
\kappa_{s,c}\leq \kappa_{c} .
\]	
\end{theorem}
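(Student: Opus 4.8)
The plan is to compare the two expressions for the condition numbers termwise, exploiting the structural hypothesis $|A|=\sum_{i=1}^q |a_i|\,|S_i|$. Recall from Theorem~\ref{thmK} and Corollary~\ref{colKinfty1} that the unstructured mixed condition number is built from $\left|\call\bfH\right|\vect(|A|)+\left|\call\bfJ\right||b|$, while from Theorem~\ref{thmKs} and Corollary~\ref{colKinfty1s} the structured one is built from $\left|\call\bfH_s\right||a|+\left|\call\bfJ\right||b|$, with $\bfH_s=P^{-1}V$ and $\bfJ$ identical in both. Since the $\bfJ|b|$ term is common, and since the outer operations (multiplication by $D_{\call x}^\dagger$, taking $\|\cdot\|_\infty$, dividing by $\|\call x\|_\infty$) are all monotone with respect to the componentwise partial order on nonnegative vectors, it suffices to prove the single vector inequality
\[
\left|\call\bfH_s\right||a| \;\le\; \left|\call\bfH\right|\vect(|A|)
\]
entrywise. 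Once this is established, applying $D_{\call x}^\dagger$ and $\|\cdot\|_\infty$ yields both $\kappa_{s,\infty}^{\rel}\le\kappa_{\infty}^{\rel}$ and $\kappa_{s,c}\le\kappa_c$ simultaneously.

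First I would unwind the definition of the columns $\sfv_i$ of $V$ from Lemma~\ref{lemma:gLSQIs}: $\sfv_i = S_i^\top r - \left[A^\top+\frac{2xr^\top}{1+x^\top x}\right]S_i x$, so that $\call\bfH_s|a| = \sum_{i=1}^q |a_i|\,\left|\call P^{-1}\sfv_i\right|$ after taking absolute values and using the triangle inequality (which is where an inequality, not equality, first enters). The key observation is that $\call\bfH$, as identified in the proof of Theorem~\ref{thmK}, has $(m(j-1)+i)$th column equal to $\call(r_i P^{-1}e_j - x_j P^{-1}[A^\top+\frac{2xr^\top}{1+x^\top x}]e_i)$, and contracting $\call\bfH$ against $\vect(|A|)$ amounts to $\sum_{i,j}|a_{ij}|\left|\call(r_i P^{-1}e_j - x_j P^{-1}[\cdots]e_i)\right|$. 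I would then write $A=\sum_k a_k S_k$, expand $S_i r$ and $S_i x$ entrywise so that $\call P^{-1}\sfv_i$ becomes a linear combination (with coefficients the entries of $S_i$) of exactly the column vectors $\call(r_\ell P^{-1}e_j - x_j P^{-1}[\cdots]e_\ell)$ appearing in $\call\bfH$. Taking absolute values and applying the triangle inequality bounds $|a_i|\,|\call P^{-1}\sfv_i|$ by a sum over those columns weighted by $|a_i|\,|(S_i)_{\ell j}|$; summing over $i$ and invoking the hypothesis $|A|=\sum_i|a_i|\,|S_i|$, i.e. $|a_{\ell j}| = \sum_i |a_i|\,|(S_i)_{\ell j}|$, collapses the weight to exactly $|a_{\ell j}|$, giving the desired entrywise bound.

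The main obstacle is bookkeeping: one must carefully match the index conventions so that the columns of $\bfH$ are indexed consistently with $\vect(|A|)$, and verify that every term produced by expanding $\call P^{-1}\sfv_i$ is genuinely one of the columns of $\call\bfH$ (rather than some other vector), so that the triangle-inequality step lands cleanly. A subtle point is that cancellation in $A=\sum a_i S_i$ is what would otherwise make $|a_{\ell j}| < \sum_i |a_i|\,|(S_i)_{\ell j}|$; the hypothesis precisely rules this out, which is why it is needed. I would also note explicitly that for common structures — Toeplitz, Hankel, circulant with the standard $0/1$ basis matrices — each entry of $A$ equals exactly one $a_i$ and each $S_i$ has disjoint $0/1$ support, so the hypothesis holds automatically; this justifies why the assumption is natural rather than restrictive. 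Having reduced everything to the single entrywise inequality, the remaining passage to $\kappa_{s,\infty}^{\rel}\le\kappa_\infty^{\rel}$ and $\kappa_{s,c}\le\kappa_c$ is immediate from monotonicity of $\|\cdot\|_\infty$ and of left-multiplication by the nonnegative diagonal matrix $D_{\call x}^\dagger$.
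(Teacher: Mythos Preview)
Your proposal is correct and follows essentially the same route as the paper: isolate the common $|\call\bfJ|\,|b|$ term, then prove the entrywise inequality $|\call\bfH_s|\,|a|\le|\call\bfH|\,\vect(|A|)$ by expanding each $\call P^{-1}\sfv_i$ as a linear combination (with coefficients $(S_i)_{\ell j}$) of the columns of $\call\bfH$, applying the triangle inequality, and collapsing the sum via the hypothesis $|a_{\ell j}|=\sum_i|a_i|\,|(S_i)_{\ell j}|$. The paper packages the same computation as the matrix factorization $\bfH_s=\bfH\,\cM^{st}$ followed by $|\call\bfH\,\cM^{st}|\le|\call\bfH|\,|\cM^{st}|$ and $|\cM^{st}|\,|a|=\vect(|A|)$; your entry-by-entry expansion is exactly the unrolled version of this. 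One small correction: the identity $|\call\bfH_s|\,|a|=\sum_{i=1}^q|a_i|\,|\call P^{-1}\sfv_i|$ is an \emph{equality} (it is just the definition of matrix--vector product after taking columnwise absolute values), not a consequence of the triangle inequality; the first genuine inequality enters at the next step, when you bound $|\call P^{-1}\sfv_i|$ by $\sum_{\ell,j}|(S_i)_{\ell j}|\,|\call(r_\ell P^{-1}e_j-x_jP^{-1}M e_\ell)|$.
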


\begin{proof} Using the monotonicity of
the infinity norm, we have

\begin{align*}
& \left\|\sum\limits_{i=1}^{q}|a_i|
\left| \call P^{-1}\left(\left(A^\top +\frac{2xr^\top}{1+x^\top x}\right )S_{i} x-
S^\rt_{i} r\right)\right| +
\left|\call P^{-1}\left (A^\top +\frac{2xr^\top}{1+x^\top x}\right) \right||b|\right\|_\infty\\
&=\left\|\left|\call
\left[\bfH V  \ \ \   \bfJ \right]\right|
\begin{bmatrix}|a|\cr|b|\end{bmatrix}\right\|_\infty
\leq\left\|
\left[ | \call \bfH | ~|V|  \ \ \   |\call \bfJ| \right]
\begin{bmatrix}|a|\cr|b|\end{bmatrix}\right\|_\infty\\
&\leq\left\|\left|\call \bfH\right|
\sum_{i=1}^q |a_i||\vect(S_{i})|+
\left|\call \bfJ \right||b|\right\|_\infty =\left\|\left| \call \bfH \right|\vect(|A|)+
\left|\call \bfJ  \right||b|\right\|_\infty,
\end{align*}
for the last equality we use the assumption $|A|=\sum_{i=1}^q |a_i||S_i|$.
With the above inequality, and the expressions of $\kappa_{s,\infty}^{\rel}$,
$\kappa_{\infty}^{\rel}$, $\kappa_{s,c}$, $\kappa_{c}$,
it is easy to prove the first two inequalities in this theorem.
\qed
\end{proof}

For Toeplitz matrices, the assumption $ |A|=\sum_{i=1}^q |a_i||S_i|$ for $q=m+n-1$ is  satisfied, when
\begin{align*}
	 S_{1}&={\tt toeplitz}(0, e_{n}),\,\ldots, \,
  S_{n}={\tt toeplitz }(0, e_1 ),\\ S_{n+1}&={\tt  toep}(e_2,0 ) \ldots, \,
  S_{m+n-1}={\tt  toeplitz }(e_{m}, 0),
\end{align*}
where \textsc{Matlab}'s notation ${\tt toeplitz}(a,b)$ denotes a Toeplitz matrix with the first column $a$ and first row $b$,   $e_i$ is the $i$th column vector of a conformal dimensional identity matrix and $0$ is the zero vector with a conformal dimension.

By taking account of the compact form of the inverse of $P$ given in Lemma \ref{lemma:P}, we can give  SVD-based formulae of $\kappa_{s,\infty}^{\rel}$ and $\kappa_{s,c}$ in the following corollary.

\begin{corollary}\label{co:un svd}
	With the notations above, we have
	\begin{align*}
		\kappa_{\infty}^{\rel} &=\frac{\left\|  \sum \limits_{i=1}^{q}
|a_i| \left| \call Q_1 Q Q_1 \left(\left[A^\top +\frac{2xr^\top}{1+x^\top x}\right] S_{i} x-
S^\rt_{i} r  \right)  \right|+  \left|\call Q_1 Q Q_1\left(A^\top+\frac{2xr^\top}{1+x^\top x}\right)\right| |b| \right\|_\infty} {\|\call x\|_\infty},\\
\kappa_{s,c}  &=\left\| D_{\call x}^\dagger  \sum \limits_{i=1}^{q}
|a_i| \left| \call Q_1 Q Q_1 \left(\left[A^\top +\frac{2xr^\top}{1+x^\top x}\right] S_{i} x-
S^\rt_{i} r  \right)   \right| \right. \\
&\left. \hspace{1cm}+ D_{\call x}^\dagger    \left| \call Q_1 Q Q_1\left(A^\top+\frac{2xr^\top}{1+x^\top x}\right)\right| |b| \right \|_\infty,
	\end{align*}
	where $Q$ and $Q_1$ are defined in Lemma  \ref{lemma:P}.
\end{corollary}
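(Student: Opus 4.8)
The plan is to obtain both identities by a direct substitution into expressions already derived. From Corollary~\ref{colKinfty1s} we have $\kappa_{s,\infty}^{\rel}=\bigl\|\,|\call\bfH_s|\,|a|+|\call\bfJ|\,|b|\,\bigr\|_\infty/\|\call x\|_\infty$, and from Corollary~\ref{colKcs} the componentwise version $\kappa_{s,c}=\bigl\|\,D_{\call x}^\dagger\bigl(|\call\bfH_s|\,|a|+|\call\bfJ|\,|b|\bigr)\bigr\|_\infty$, where $\bfH_s=P^{-1}V$ with $V=[v_1,\dots,v_q]$, $v_i=S_i^\top r-\bigl[A^\top+\tfrac{2xr^\top}{1+x^\top x}\bigr]S_ix$ (Lemma~\ref{lemma:gLSQIs}), and $\bfJ=P^{-1}\bigl[A^\top+\tfrac{2xr^\top}{1+x^\top x}\bigr]$ from~\eqref{eqnV}. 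Column by column, $|\call\bfH_s|\,|a|=\sum_{i=1}^q|a_i|\,|\call P^{-1}v_i|$, and since the absolute value absorbs the overall sign, $|\call P^{-1}v_i|=\bigl|\call P^{-1}\bigl(\bigl[A^\top+\tfrac{2xr^\top}{1+x^\top x}\bigr]S_ix-S_i^\top r\bigr)\bigr|$. Hence the only factor in these two formulae not yet expressed through the SVD of $[A,\,b]$ is $P^{-1}$ itself.

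Next I would invoke Lemma~\ref{lemma:P}: under the genericity condition~\eqref{eq:genericity}, $P^{-1}=Q_1QQ_1$ with $Q=V_{11}D^{-1}V_{11}^\top$, $Q_1=I_n+xx^\top$ and $D=\diag(\sigma_1^2-\sigma_{n+1}^2,\dots,\sigma_n^2-\sigma_{n+1}^2)$. Replacing every occurrence of $P^{-1}$ by $Q_1QQ_1$ — once in the $\bfH_s$-term and once in the $\bfJ$-term — turns the two expressions above verbatim into the claimed SVD-based formulae for $\kappa_{s,\infty}^{\rel}$ and $\kappa_{s,c}$; nothing else needs to change, since $|\cdot|$ and $\|\cdot\|_\infty$ are insensitive to how the inner matrix is factored.

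There is no genuine analytic obstacle here: the statement is pure substitution, and the real content has already been established in Lemma~\ref{lemma:P}, Corollary~\ref{colKinfty1s} and Corollary~\ref{colKcs}; the only care needed is the bookkeeping of the $v_i$'s and of the sign absorbed by the absolute values. What is worth emphasizing afterwards is that, together with $x$ recovered from $v_{n+1}$ via~\eqref{ex:x} and $r=(\sigma_{n+1}/v_{n+1,n+1})\,u_{n+1}$ from~\eqref{eq:r}, the substituted formulae are expressed entirely in terms of the SVD of $[A,\,b]$ that is already computed when the structured TLS solution is obtained by the SVD-based method of~\cite[Algorithm~3.1]{VanHuffelVandewalle1991Book}, so $\kappa_{s,\infty}^{\rel}$ and $\kappa_{s,c}$ can be evaluated without ever forming or inverting $P$.
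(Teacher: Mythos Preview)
Your proposal is correct and mirrors the paper's approach exactly: the paper does not supply a separate proof for this corollary but simply introduces it with the remark that, by substituting the compact form $P^{-1}=Q_1QQ_1$ from Lemma~\ref{lemma:P} into the expressions of Corollaries~\ref{colKinfty1s} and~\ref{colKcs}, one obtains the SVD-based formulae. Your column-by-column rewriting of $|\call\bfH_s|\,|a|$ and the observation that the sign of $v_i$ is absorbed by the absolute value are precisely the bookkeeping steps implicit in that substitution.
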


\section{Numerical examples}\label{sec:nume ex}

In this section we test some numerical examples to validate the previous derived results. All the
computations are carried out using \textsc{Matlab} 8.1 with the machine precision
$\mu=2.2 \times 10^{-16}$.

For a given TLS problem, the TLS solution is computed by \eqref{ex:x}. When the data $A$ and $b$ are generated, for the perturbations, we construct  them as
\begin{equation}\label{eq:pert}
\Delta A=10^{-8} \cdot \Delta A_{1}\odot A,\quad \Delta b=10^{-8}\cdot \Delta b_{1}\odot  b,
\end{equation}
where  each components of $\Delta A_1 \in \R^{m\times n}$ and $\Delta b_1 \in \R^{m}$  are uniformly distributed in the interval $ (-1,1) $,  and $\odot$ denotes the componentwise multiplication of two conformal dimensional matrices. When the perturbations are small enough, we denote the unique solution by $\tilde x$  of the perturbed TLS problem \eqref{eq:pert TLS}.  We use the SVD method  \cite[Algoritm 3.1]{VanHuffelVandewalle1991Book} to compute the solution $x$ and the perturbed solution $\tilde x$ via \eqref{ex:x} separately.

Let $x_{\max} $  and $x_{\min} $ be the maximum and minimum component of $x$ in the absolute vale sense, respectively.  For the $\call$ matrix in our condition numbers, we choose
\[
\call_0=I_n,\quad \call_1=\begin{bmatrix} e_1 & e_2 \end{bmatrix}^\top,\quad
\call_2=e_{\max },\quad
\call_3=e_{\min },
\]
where $\max$ and $\min$ are the indexes corresponding to $x_{\max} $  and $x_{\min} $. Thus, corresponding to the above four matrices, the whole
${x}$, the subvector $[x_1 \ x_2]^{\top }$,
the components $x_{\max} $  and $x_{\min} $ are selected respectively.

We measure the normwise, mixed and componentwise relative errors in
$\call {x}$ defined by
\[
r_2^{\rm rel} =
\frac{\|\call \tilde{{x}} -
\call {x} \|_2}
{\|\call {x} \|_2},\quad
r_\infty^{\rm rel} =
\frac{\|\call \tilde{{x}} -
\call {x} \|_\infty}
{\|\call {x} \|_\infty},\quad
r_c^{\rm rel} =
\frac{\|\call \tilde{{x}} -
\call {x} \|_c}
{\|\call {x} \|_c},
\]
where $\|\cdot\|_c$ is the componentwise norm defined
in \eqref{eq:comp norm}.

\begin{example}\label{ex:1}
We construct the matrix $A$ and  vector $b$ as follows
\begin{align*}
A&=\begin{bmatrix}
  \delta &0&0&0\cr
  0&0&0&0\cr
   0&\delta &0&0\cr
    0&0&0&0\cr
     0&0&0&0\cr
      0&0&0&0\cr
       0&0&1&0\cr
        0&0&0&0\cr
         0&0&0&1
\end{bmatrix} \in \R^{9\times 4},\quad
b=\begin{bmatrix}
  1\cr\vdots \cr 1
\end{bmatrix} \in \R^{9},
\end{align*}
where $\delta$ is a tiny positive parameter. It is easy to see that $A$  is sparse and badly scaled. Thus it is suitable to consider componentwise perturbation analysis for TLS.
\end{example}

From Table \ref{tab:T1}, it is observed that when $\delta $ decrease from $10^{-3}$ to $10^{-9}$,  $\cond^\rel (L,A,b) $ varifies  from the order of $\Oh(10^4)$ to the order of $\Oh(10^{10})$, while $\kappa_\infty^{\rel}$  and  $\kappa_c^{\rel}$ are always $\Oh(1)$. The relative errors  $r_2^\rel$, $r_\infty^{\rel}$ and $r_c^{\rel}$ are tiny, which means that the original TLS problem is well-conditioned. This example indicates it is more suitable to adopt $\kappa_\infty^{\rel}$  and  $\kappa_c^{\rel}$ to measure the conditioning of the TLS problem when the data is sparse or badly-scaled. Moreover, it can be seen that the relative errors can be bounded by the asymptotic first order perturbation bounds based on the proposed condition numbers.  It should be pointed out the upper bounds for $\kappa_\infty^{\rel}$  and  $\kappa_c^{\rel}$ are asymptotic sharp, since they can be attainable from this examples. Also for different choices of $L$, there are differences for the relative errors and condition numbers, which tell us that it is necessary that we should consider the conditioning of the particular interested component by incorporating the matrix $L$ in \eqref{eq:g dfn}.

\begin{landscape}

\begin{table}[!htbp]
\caption{
Comparison of condition numbers with the corresponding relative
errors for Example \ref{ex:1}.
} \label{tab:T1}
\centering
\begin{tabular}{cccccccccc}
\hline
 $\delta$ & $\call$ & $r_2^\rel$  & $\cond^\rel (L,A,b) $ & $r_\infty^{\rel}$ &$\kappa_\infty^{\rel}$ &$\kappa_\infty^\u$& $r_c^{\rel}$&$\kappa_c$ &$\kappa_c^\u$ \\
 \hline
$10^{-3}$&$I_n$ &5.04e-08 &  1.52e+04 &  5.68e-08  &8.43e+00 &  8.43e+00 &  6.85e-09&  8.43e+00 &  8.43e+00 \\
 &$L_1$ &5.04e-08 &  1.52e+04 &  5.68e-08 &  8.43e+00 &  8.43e+00 &  6.85e-09 & 8.43e+00 &  8.43e+00 \\
 & $L_2$ &2.29e-08 &  1.64e+04 &  2.29e-08 & 2.00e+00 &  2.00e+00 &  2.29e-08 & 2.00e+00 &  2.00e+00 \\
 & $L_3$ &4.29e-09 &  1.64e+04 &  4.29e-09&  2.00e+00 &  2.00e+00 &  4.29e-09&  2.00e+00 &  2.00e+00 \\
\hline
$10^{-6}$&$I_n$ & 4.95e-08 &  1.52e+07 &  6.28e-08 &  8.43e+00 &  8.43e+00 &  1.59e-08 &  8.43e+00 &  8.43e+00\\
  &$L_1$ &4.95e-08 &  1.52e+07 &  6.28e-08 &  8.43e+00 &  8.43e+00 &  1.59e-08 &  8.43e+00 &  8.43e+00\\
  &$L_2$ &1.46e-08 &  1.64e+07 &  1.46e-08 &  2.00e+00 &  2.00e+00 &  1.46e-08 &  2.00e+00 &  2.00e+00\\
  &$L_3$ &4.46e-09 &  1.64e+07 &  4.46e-09 &  2.00e+00 &  2.00e+00 &  4.46e-09 &  2.00e+00 &  2.00e+00\\
  \hline
  $10^{-9}$&  $I_n$ & 4.10e-08 &  1.52e+10 &  4.10e-08 &  8.43e+00 &  8.43e+00 &  8.08e-16 &  8.43e+00 &  8.43e+00  \\
  &$L_1$ & 4.10e-08 &  1.52e+10 &  4.10e-08 &  8.43e+00 &  8.43e+00 &  8.08e-16 &  8.43e+00 &  8.43e+00  \\
  &$L_2$ & 3.34e-07 &  1.64e+10 &  3.34e-07 &  2.00e+00 &  2.00e+00 &  3.34e-07 &  2.00e+00 &  2.00e+00  \\
 &$L_3$ & 3.23e-07 &  1.64e+10 &  3.23e-07 &  2.00e+00 &  2.00e+00 &  3.23e-07 &  2.00e+00 &  2.00e+00 \\
 \hline
\end{tabular}
\end{table}

\begin{table}[!htbp]
\caption{
Comparison of condition numbers with the corresponding relative
errors for Example \ref{ExampleBaboulin}.
} \label{tab:T2}
\centering
\begin{tabular}{cccccccccc}
\hline
 $e_p$ & $\call$ & $r_2^\rel$  & $\cond^\rel (L,A,b) $ & $r_\infty^{\rel}$ &$\kappa_\infty^{\rel}$ &$\kappa_\infty^\u$& $r_c^{\rel}$&$\kappa_c$ &$\kappa_c^\u$ \\
 \hline
$10^{0}$&$I_n$& 1.02e-08 &  1.05e+02 &  1.21e-08 &  6.37e+00 &  5.25e+02 &  5.09e-09 &  2.63e+01 &  1.41e+03  \\
&$L_1$ & 7.32e-09 &  6.43e+01 &  5.67e-09 &  1.00e+01 &  1.94e+02 &  3.10e-09 &  1.08e+01 &  1.95e+02  \\
&$L_2$ & 4.82e-09 &  8.24e+01 &  4.82e-09 &  1.03e+01 &  1.94e+02 &  4.82e-09 &  1.03e+01 &  1.94e+02  \\
 &$L_3$ &4.72e-10 &  6.72e+01 &  4.72e-10 &  1.01e+01 &  1.94e+02 &  4.72e-10 &  1.01e+01 &  1.94e+02  \\
\hline
$10^{-4}$&$I_n$ & 1.36e-06 &  6.64e+05 &  1.97e-06 &  4.84e+04 &  4.78e+06 &  6.56e-07 &  4.84e+04 &  4.78e+06  \\
 &$L_1$ & 1.19e-06 &  5.88e+05 &  1.20e-06 &  2.99e+04 &  2.95e+06 &  1.19e-06 &  2.99e+04 &  2.95e+06  \\
  &$L_2$ &1.18e-06 &  5.88e+05 &  1.18e-06 &  2.99e+04 &  2.95e+06 &  1.18e-06 &  2.99e+04 &  2.95e+06  \\
 &$L_3$ & 1.19e-06 &  5.88e+05 &  1.19e-06 &  2.99e+04 &  2.95e+06 &  1.19e-06 &  2.99e+04 &  2.95e+06  \\
  \hline
  $10^{-8}$&  $I_n$ & 4.88e-02 &  6.67e+09 &  8.57e-02 &  5.15e+08 &  1.49e+10 &  6.06e-03 &  2.79e+09 &  8.07e+10  \\
&$L_1$ & 1.10e-02 &  1.51e+09 &  1.10e-02 &  6.63e+07 &  1.92e+09 &  1.10e-02 &  6.63e+07 &  1.92e+09  \\
 &$L_2$ &1.10e-02 &  1.51e+09 &  1.10e-02 &  6.63e+07 &  1.92e+09 &  1.10e-02 &  6.63e+07 &  1.92e+09  \\
 &$L_3$ &1.10e-02 &  1.51e+09 &  1.10e-02 &  6.63e+07 &  1.92e+09 &  1.10e-02 &  6.63e+07 &  1.92e+09\\
 \hline
\end{tabular}
\end{table}

\end{landscape}

\begin{example} \label{ExampleBaboulin}
 We adopt the example from \cite{Baboulin2011SIMAX}, i.e.,
$$
\left[A, ~b\right]=Y\begin{bmatrix}D\cr 0\end{bmatrix} Z^\top \in \R^{m\times(n+1)}, \quad Y=I_m-2yy^\top,\quad Z=I_{n+1}-2zz^\top
$$
where $y\in\R^m$ and $z\in \R^{n+1}$ are random unit vectors and $D={\diag}(n,n-1,\ldots,1,1-e_p)$ for  a given parameter $e_p$.
We set $m=100,\, n=20$ as \cite{Baboulin2011SIMAX}. In this example, the matrix $A$ and $b$ are usually not spares and badly-scaled from their forms. So it cannot be argued that there should be big differences between the normwise condition numbers and mixed/componentwise condition numbers.
\end{example}

From the observation of Table \ref{tab:T2}, we can conclude that when $e_p$ becomes smaller, the corresponding TLS problem tends to be more ill-conditioned. For example, when $e_p=10^{-8}$, the small perturbations on $A$ and $b$ cause big relative errors for the TLS solution $x$. Because the generated data $A$ and $b$ is usually not sparse or badly-scaled, there are no big differences between $\cond^\rel(L,A,b)$ and $\kappa_\infty^{\rel}$ ($\kappa_c$). However, the values of  $\kappa_\infty^{\rel}$  and $\kappa_c$ are smaller to the corresponding parts of $\cond^\rel(L,A,b)$  for different $L$ and $e_p$. And the asymptotic first order perturbation bounds based on $\kappa_\infty^{\rel}$  and $\kappa_c$  are sharper than the ones given by $\cond^\rel(L,A,b)$.  Also, the upper bounds $\kappa_\infty^\u$ and $\kappa_c^\u$ are effective, since they are at most one hundredfold of $\kappa_\infty^{\rel}$  and $\kappa_c$, respectively.



\begin{example}\label{ex:str}
This example is taken from \cite{KammNagy1998}, which is from the application in signal restoration. Let $\alpha = 1.25$
and $\omega = 8$. The convolution matrix $\bar{A}$ is an
$m \times (m-2\omega)$ Toeplitz matrix with entries in the first column given by
$$
a_{i1} = \frac{1}{\sqrt{2 \pi \alpha^2}} \exp \left[ \frac{-(\omega - i + 1)^2}{2\alpha^2} \right], ~~ i = 1,2,\ldots,2\omega+1,
$$
and $a_{i1} = 0$ otherwise. The entries in the row are all zeros except $a_{11}.$
The target Toeplitz matrix $A$ and right-hand side vector $b$ are then constructed as
$$A = \bar{A} + E ~\mbox{and}~  b = \bar{b} + e$$
where $\bar{b}$ is the vector of all ones and $E$ is a random Toeplitz matrix with the same structure as $\bar{A}.$ The entries in $E$
and $e$ are generated from the standard normal distribution and scaled such that
$$\frac{\|e\|_2}{\|\bar{b}\|_2} = \frac{\|E\|_2}{\|\bar{A}\|_2} = \gamma.$$
In our test, we take $\gamma = 0.001$ and $m=200$.
\end{example}


\begin{table}[H]
\caption{
Comparison of condition numbers with the corresponding relative
errors for Example \ref{ex:str}.
} \label{tab:T3}
\centering
\begin{tabular}{cccccccccc}
\hline
 $\call$  & $r_\infty^{\rel}$ &$\kappa_\infty^{\rel}$ &$\kappa_{s,\infty}^{\rel}$& $r_c^{\rel}$&$\kappa_c$ &$\kappa_{s,c}$ \\
 \hline
$I_n$& 5.14e-06 &  3.30e+04 &  2.50e+02 &  1.94e-06 &  4.26e+06 &  4.37e+03  \\
 $L_1$& 6.55e-06 &  4.87e+04 &  1.69e+02 &  6.55e-06 &  4.87e+04 &  1.69e+02  \\
 $L_2$& 6.21e-06 &  4.67e+04 &  1.75e+02 &  6.21e-06 &  4.67e+04 &  1.75e+02  \\
 $L_3$& 5.77e-06 &  4.42e+04 &  1.80e+02 &  5.77e-06 &  4.42e+04 &  1.80e+02  \\
\hline
\end{tabular}
\end{table}


In Table \ref{ex:str}, the structured mixed and componentwise condition numbers are always smaller than the corresponding unstructured mixed and componentwise counterparts. The maximum and minimum ratios between the unstructured  and structured ones are of $\Oh(10^3)$ and $\Oh(10^2)$, respectively. So it is suitable to consider the structured perturbation analysis  and measure the structured conditioning instead of the unstructured conditioning for the structured TLS problem.

\section{Concluding Remarks}\label{sec:con}

In this paper we focused on  the unstructured and structured componentwise perturbation analysis for the TLS problem.  Condition number expressions for the linear function of the TLS solution were derived through the dual techniques under componentwise perturbations for the input data. Moreover we studied the relationship between the new derived ones and the previous results. Sharp upper bounds for the unstructured condition numbers were given. We had proved that the structured condition numbers are smaller than the unstructured ones from the derived explicit expressions. Numerical examples validated our theoretical results.


%

\end{document}